\documentclass[12pt]{article}
\usepackage{fullpage, comment, authblk, stackrel}
\usepackage[small]{caption}
\usepackage{tikz-cd}
\usepackage{subcaption}
\usepackage{amstext} 
\usepackage{array}  

\newcolumntype{L}{>{$}l<{$}} 
\newcolumntype{C}{>{$}c<{$}} 

\usepackage{nicefrac}
\usepackage{hyperref}
\usepackage{amssymb,mathtools,amsthm, amsmath}
\usepackage[all,cmtip]{xy}
\usepackage{eulervm,xfrac,mathdots}
\usepackage{rotating, setspace}

\newcommand{\blocktheorem}[1]{%
  \csletcs{old#1}{#1}
  \csletcs{endold#1}{end#1}
  \RenewDocumentEnvironment{#1}{o}
    {\par\addvspace{1.5ex}
     \noindent\begin{minipage}{\textwidth}
     \IfNoValueTF{##1}
       {\csuse{old#1}}
       {\csuse{old#1}[##1]}}
    {\csuse{endold#1}
     \end{minipage}
     \par\addvspace{1.5ex}}
}


\newcommand{\Rspace}        	{{\mathbb R}}

\newcommand{\Z}        		{{\mathbb Z}}


\newcommand{\mono}		{\hookrightarrow}




\newcommand{\Bfunc}          	{{\mathsf{B}}}
\newcommand{\Cfunc}          	{{\mathsf{C}}}

\newcommand{\Ffunc}          	{F}
\newcommand{\Gfunc}          	{G}
\newcommand{\Hfunc}          	{H}


\newcommand{\colim}		{\mathsf{colim}}


\newcommand{\Dist}			{{d}}

\newcommand{\field}			{\mathsf{k}}

\newcommand{\ee}			{\varepsilon}

\newcommand{\op}			{\mathsf{op}}

\newcommand\define[1]		{{\bf{#1}}}





\newcommand{\Int}          		{{\mathbb{I}}}

\makeatletter
\def\moverlay{\mathpalette\mov@rlay}
\def\mov@rlay#1#2{\leavevmode\vtop{%
   \baselineskip\z@skip \lineskiplimit-\maxdimen
   \ialign{\hfil$\m@th#1##$\hfil\cr#2\crcr}}}
\newcommand{\charfusion}[3][\mathord]{
    #1{\ifx#1\mathop\vphantom{#2}\fi
        \mathpalette\mov@rlay{#2\cr#3}
      }
    \ifx#1\mathop\expandafter\displaylimits\fi}
\makeatother


\newtheoremstyle{amit}
{7pt}
{7pt}
{}
{7pt}
{\bf}
{:}
{.5em}
{}


\hypersetup{
  colorlinks   = true, 
  urlcolor     = blue, 
  linkcolor    = blue, 
  citecolor   = blue 
}


\newcommand{\diam}			{{\text{diam}}}

\newcommand{\Mon}		{{\mathsf{Mon}}}
\newcommand{\Lan}			{{\mathsf{Lan}}}
\newcommand{\Fnc}			{{\mathsf{Fnc}}}
\newcommand{\Fil}			{{\mathsf{Fil}}}

\newcommand{\BD}			{{{\mathsf{ZB}}}}
\newcommand{\M}			{{{\mathsf{MI}}}}
\newcommand{\Ch}			{{{\mathsf{Ch}}}}
\newcommand{\Zfunc}		{{{\mathsf{Z}}}}
\renewcommand{\Vec}		{{{\mathsf{Vec}}}}

\newcommand{\rank}		{{{\mathsf{rk\,}}}}

\theoremstyle{amit}
\newtheorem{defn}{Definition}[section]
\newtheorem{prop}[defn]{Proposition}
\newtheorem{lem}[defn]{Lemma}
\newtheorem{thm}[defn]{Theorem}

\newtheorem{rmk}[defn]{Remark}
\newtheorem{ex}[defn]{Example}

\begin{document}


\title{Edit Distance and Persistence Diagrams Over Lattices
\thanks {This material is based upon work supported by the National Science Foundation under 
Grant No.\ 1717159.}}
\author[1]{Alexander McCleary}
\author[1]{Amit Patel}
\affil[1]{Department of Mathematics, Colorado State University}
\date{}

\maketitle

\begin{abstract}
We build a functorial pipeline for persistent homology. 
The input to this pipeline
is a filtered simplicial complex indexed by any finite metric lattice and the output is a persistence
diagram defined as the M\"obius inversion of its birth-death function.
We adapt the
Reeb graph edit distance to each of our categories
and prove that both functors in our pipeline are $1$-Lipschitz making our pipeline stable.
Our constructions generalize the classical persistence diagram and, in this setting,
the bottleneck distance is strongly equivalent to the edit distance.
\end{abstract}

\section{Introduction}
\label{sec:introduction}

In the most basic setting, persistent homology takes as input a
finite $1$-parameter filtration 
$K_0 \subseteq K_1 \subseteq \cdots \subseteq K_n = K$
of a finite simplicial complex $K$ and outputs a persistence diagram.
The persistence diagram, as originally defined in \cite{CSEdH} and equivalently in \cite{size_theory, 10.1117/12.279674}, 
is roughly defined as follows.
Fix a field $\field$.
For every pair of indices $a \leq b$, let $f_\ast [a,b]$ 
be the rank of the $\field$-linear map on homology $\Hfunc_\ast ( K_a; \field ) \to \Hfunc_\ast (K_b ; \field)$ 
induced by the inclusion $K_a \subseteq K_b$.
The persistence diagram $g_\ast$ is the assignment to every pair $a \leq b$
the following signed sum:
	\begin{equation}
	\label{eq:inclusion_exclusion}
	g_\ast [a,b] := f_\ast[a, b-1] - f_\ast[a, b] + f_\ast[a-1, b] - f_\ast[a-1, b-1].
	\end{equation}
We interpret the integer $g_\ast[a,b]$ as the number of independent
cycles that appear at $a$ and become boundaries at $b$.
The most important property of the persistence diagram is that it is stable
to perturbations of the filtration on $K$ \cite{CSEdH}.
See \S \ref{sec:applications} for a discussion of stability and applications.


In \cite{Patel2018}, the second author identifies Equation (\ref{eq:inclusion_exclusion}) as a special case
of the M\"obius inversion formula.
This observation allows for generalizations of the persistence diagram in at least two directions.
First, we may consider any coefficient ring and still get
a well defined persistence diagram.
In fact, any constructible $1$-parameter persistence module valued in any essentially small
abelian category admits a persistence diagram.
Furthermore, the bottleneck stability theorem of \cite{CSEdH} generalizes to this setting~\cite{bottleneck}.
Second, we may consider as input a filtration indexed over any locally finite poset
and still get a well defined persistence diagram as demonstrated by \cite{kim2018generalized}.
However, it is only now, in this paper, that we are able to establish a statement of stability for these
more general persistence diagrams thus opening a path to applications; see~\S \ref{sec:applications}.

The persistence diagram is just one invariant of a filtered space.
For example, the persistence landscape \cite{landscapes} is a second invariant that is better suited 
for machine learning and statistical methods \cite{landscapes_properties, 7299106}.
The work of \cite{betthauser2019graded} recasts persistence landscapes as a M\"obius inversion
and, in the process, uncovers previously unknown structure.

\paragraph{Contributions}
We establish and study the following pipeline of functors for persistent homology with coefficients
in a fixed field $\field$:
	\begin{equation*}
	\begin{tikzcd}
	\Fil(K) \ar[rr, "\BD_\ast"] && \Mon \ar[rr, "\M"] && \Fnc.
	\end{tikzcd}
	\end{equation*}
The input to the pipeline is the category of filtrations $\Fil(K)$.
Its objects are filtrations of finite simplicial complexes indexed over finite metric lattices.
The second category $\Mon$ consists of monotone integral functions over finite metric lattices.
To every object in $\Fil(K)$, the functor $\BD_\ast$ assigns a monotone integral function.
Instead of using the rank function of a filtration, as mentioned earlier, we use the birth-death 
function $\BD_\ast$ of a filtration.
The birth-death function is inspired by the work of Henselman-Petrusek and Ghrist \cite{GregH1, GregH2}.
In a way, the two functions are equivalent; see \S \ref{sec:ordinary}.
The third category $\Fnc$ consists of integral, but not necessarily monotone, functions over finite metric lattices.
To every object in $\Mon$, the M\"obius inversion functor~$\M$ assigns
its M\"obius inversion, which is an integral function but not necessarily monotone.
Morphisms in the first two categories are inspired by the definition of the
\emph{Reeb graph elementary edit} of Di Fabio and Landi \cite{Di_Fabio}.
We think of the morphisms in $\Fil(K)$ as a deformation of one filtration to another.
This deformation is formally a Kan extension.
Morphisms in $\Mon$ are defined in the same way, but morphisms in $\Fnc$ 
are inspired by morphisms between signed measure spaces.
Our main theorem, Theorem \ref{thm:main}, says that both functors
in the pipeline are $1$-Lipschitz.
The three metrics, one for each category, are all inspired
by the categorification
of the \emph{Reeb graph edit distance} by Bauer, Landi, and M\'emoli~\cite{edit_distance_universal}.
Hence, we call all three metrics the \emph{edit distance}.
Finally in \S \ref{sec:ordinary}, we prove that our definition of the persistence diagram
generalizes the classical definition \cite{CSEdH, size_theory,10.1117/12.279674} and that, in this setting, the edit distance between persistence
diagrams is strongly equivalent to the bottleneck distance; see Theorem \ref{thm:bottleneckequivalence}.
%

\paragraph{Outline}
We start with a discussion of stability and its importance in applications
of persistent homology.
In \S \ref{sec:one}, we establish basic definitions and properties of metric lattices.
The next three sections \S \ref{sec:two}, \S \ref{sec:three}, and \S \ref{sec:four}
establish the three categories $\Fil(K)$, $\Mon$, and $\Fnc$, respectively, along 
with the functors $\BD_\ast$ and $\M$.
In \S \ref{sec:five}, we define the edit distance in all three categories and prove
that both functors $\BD_\ast$ and $\M$ are $1$-Lipschitz.
We put the pieces together in \S \ref{sec:persistence_diagrams} and state our main theorem.
Finally in \S \ref{sec:ordinary}, we verify that our framework generalizes the classical persistence diagram,
and we prove that the bottleneck distance is strongly equivalent to the edit distance.

\section{Applications of Stability}
\label{sec:applications}

Scientists study natural phenomena by collecting and analyzing data.
Data is a finite set usually with additional structure such as, for example, a metric
or an embedding into a vector space.
The goal is to extract information and then use this information to build models 
or theories.
However, measurements are noisy and so any information
that is extracted must be stable to noise.
Persistent homology is an attractive tool for studying the shape of data precisely
because it extracts stable information in the form of a persistence diagram.

A major drawback of classical persistent homology is its sensitivity
to outliers. 
An outlier is, roughly speaking, a data point that is too far away from where it should be.
The persistence diagram changes drastically by the introduction of just one outlier.
This is not desirable as outliers are common in many data sets.
The solution requires a theory of persistent homology that can handle a
two-parameter filtration of a space.
Such a theory not only requires a generalization of the classical persistence diagram but also
a statement of stability for the reasons described above.
In this paper, we present a generalization of the persistence diagram for 
simplicial complexes filtered over any finite lattice, but, as mentioned in the last section,
this is not entirely new.
One of the achievements of this paper is a first ever statement of stability 
for this more general setting; see Theorem \ref{thm:main}.
To our surprise, our stability theorem closely resembles the
bottleneck stability theorem; see Theorem \ref{thm:bottleneckequivalence}.

Functoriality of persistent homology is the second main achievement of this paper.
Considering that category theory has its roots in algebraic topology, it may be surprising that
the assignment of a persistence diagram to a filtration was, until now, not known to be functorial.
Since its inception in the 1940's, category theory has permeated every field of mathematics, logic, and
parts of computer science.
It is only reasonable to expect that functoriality will play a major role in the future development
of applications for persistent homology.
For example, the 1-parameter family of persistence diagrams associated to 
a time-varying data set can now be described
as a constructible cosheaf \cite{curry_patel} of persistence diagrams.

We now review a few applications of classical persistent homology that are
well known within the applied topology community.
All rely on stability.

\paragraph{Homological Inference}
One of the first applications of the bottleneck stability theorem is homological inference, which appears
in the same paper as the theorem~\cite{CSEdH}.
Data often lives along a lower dimensional subspace of a higher dimensional vector space.
In this case, it is useful
to assume that the data is sampled from some sufficiently nice subspace, say $X \mono \Rspace^n$.
For example, $X$ could be a smooth manifold, a compact Whitney stratified space, or
a piecewise-linear embedding of a finite simplicial complex.
A natural question to ask is the following: How can one infer the homology of $X$ 
from a finite sample $Y \subseteq X$? 
The answer to this question lies in a quantity called the \emph{homological feature size} of $X$.
The statement is roughly as follows.
Suppose $Y$ is chosen so that the Hausdorff distance between $X$ and $Y$ in $\Rspace^n$ is at
most $r/4$, where $r > 0$ is the homological feature size of $X$.
Then the homology of $X$ can be read from the classical persistence diagrams associated to $Y$.

The above sampling condition relies on the distance between $X$ and the sample $Y$,
which is highly sensitive to outliers.
One way of reducing the impact of outliers is by thinking of $X$ and $Y$ as measures and then using
the Wasserstein distance between the two measures~\cite{distance_to_measure}.
One can then imagine setting up a two-filtration: the first parameter filters by distance, as in the case above,
and the second parameter filters by mass.
We suspect Theorem \ref{thm:main} implies a homological inference theorem for this two-parameter setting.

\paragraph{Machine Learning}
The growing field of data science offers a wide variety of tools for extracting
information from data.
However, most of these tools require as input a vector, but a persistence diagram 
is far from a vector.
In order to make use of these tools, one must vectorize the persistence diagram.
\emph{Persistence images} is one popular way of vectorizing the classical persistence diagram \cite{JMLR:v18:16-337}.
Again, because data is inherently noisy, it is crucial that any vectorization is stable to noise.
Persistence images are stable.

In \cite{10.5555/3294771.3294927}, the authors develop an input layer for deep neural networks that
takes a classical persistence diagram and computes a parametrized projection 
that can be learned during network training. 
This layer is designed in a way that is stable to perturbations of the input persistence diagrams.

Our main theorem, Theorem \ref{thm:main}, lays the foundation for using our M\"obius inversion
based, multi-parameter persistence diagrams for applications in machine learning.

\section{Preliminaries}
\label{sec:one}
We start with an introduction to bounded lattices and bounded lattice functions.
From here, we equip our lattices with a metric and discuss the distortion
of a lattice function between two metric lattices.

\subsection{Lattices}
\label{sec:one.lattices}

A \emph{poset} is a set $P$ with a reflexive, antisymmetric, and transitive relation $\leq$.
For two elements $a, b \in P$ in a poset, we write $a < b$ to mean $a \leq b$ and $a \neq b$.
For any $a \leq c$, the \emph{interval} $[a,c] \subseteq P$ is the subposet consisting of all $b \in P$ such
that $a \leq b \leq c$.
The poset $P$ has a \emph{bottom} if there is an element $\bot \in P$ such that
$\bot \leq a$ for all $a \in P$.
The poset $P$ has a \emph{top} if there is an element $\top \in P$ such that
$a \leq \top$ for all $a \in P$.
A function $\alpha : P \to Q$ between two posets is \emph{monotone} if for all $a \leq b$, $\alpha(a) \leq \alpha(b)$.

The \emph{meet} of two elements $a,b \in P$ in a poset, written $a \wedge b$, is the greatest lower bound
of $a$ and $b$.
The \emph{join} of two elements $a,b \in P$ in a poset, written $a \vee b$, is the least upper bound of 
$a$ and $b$.
The poset $P$ is a \emph{lattice} if both joins and meets exist for all pairs of elements in $P$.
A lattice is \emph{bounded} if it contains both a top and a bottom.
If $P$ is a finite lattice, then the existence of meets and joins implies that $P$ has a bottom and a top,
respectively.
Therefore all finite lattices are bounded.
A function $\alpha : P \to Q$ between two bounded lattices is a \emph{bounded lattice function} if $\alpha(\top) = \top$,
$\alpha(\bot) = \bot$, and for all $a,b \in P$,
$\alpha(a \vee b) = \alpha(a) \vee \alpha(b)$ and $\alpha(a \wedge b) = \alpha(a) \wedge \alpha(b)$.
Note that bounded lattice functions are monotone.
This is because $a \leq b$ if and only if $a \wedge b = a$, and therefore
$$\alpha(a) = \alpha(a \wedge b) = \alpha(a) \wedge \alpha(b) \Longrightarrow \alpha(a) \leq \alpha(b).$$

\begin{prop}
\label{prop:maximal}
Let $P$ and $Q$ be finite lattices and $\alpha : P \to Q$ a bounded lattice function. 
Then for all $a \in Q$, the pre-image $\alpha^{-1} [\bot, a] $ has a maximal element.
\end{prop}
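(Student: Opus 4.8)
The plan is to show that the preimage $S := \alpha^{-1}[\bot, a]$ is nonempty and closed under finite joins; since $P$ is finite, $S$ then contains its own join, which is its greatest element, and in particular a maximal element.

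First I would observe that $S$ is nonempty. Since $\alpha$ is a bounded lattice function we have $\alpha(\bot) = \bot \leq a$, so $\bot \in S$. Next comes the key step: $S$ is closed under pairwise joins. If $p, q \in S$, meaning $\alpha(p) \leq a$ and $\alpha(q) \leq a$, then because $\alpha$ preserves joins,
	\[
	\alpha(p \vee q) = \alpha(p) \vee \alpha(q) \leq a \vee a = a,
	\]
so $p \vee q \in S$. Iterating this, $S$ is closed under all finite joins. Finally, since $P$ is finite, $S$ is a finite nonempty subset of $P$, so $m := \bigvee_{p \in S} p$ is a finite join of elements of $S$ and hence lies in $S$. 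By construction $p \leq m$ for every $p \in S$, so $m$ is the greatest element of $S$, and in particular a maximal element of $\alpha^{-1}[\bot, a]$.

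The argument is short and there is no serious obstacle; the one point worth care is that it genuinely uses join-preservation of $\alpha$, not merely monotonicity — monotonicity alone would not let one conclude that the join of two elements of $S$ again lands in $S$. (If one only wants \emph{a} maximal element rather than a greatest one, finiteness of $P$ already suffices once nonemptiness is noted; but the join argument gives the stronger, and in the sequel more useful, conclusion that the maximal element is unique.)
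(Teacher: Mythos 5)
Your proof is correct and follows essentially the same route as the paper: nonemptiness via $\alpha(\bot)=\bot$, closure of the preimage under joins, and finiteness of $P$ to produce a greatest (hence maximal) element. The paper additionally notes closure under meets so as to call the preimage a finite sublattice, but the join-closure you use is the part that does the work.
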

\begin{proof}
The pre-image is non-empty because $\alpha(\bot) = \bot$.
The pre-image is finite because $P$ is finite.
For any two elements $b$ and $c$ in the pre-image, both $b \vee c$ and 
$b \wedge c$ are also in the pre-image because
	\begin{align*}
	\alpha(b \vee c) = \alpha(b) \vee \alpha(c) \leq a \vee a = a	&&
	\alpha(b \wedge c) = \alpha(b) \wedge \alpha(c) \leq a \wedge a = a.
	\end{align*}
Thus $\alpha^{-1}[\bot, a]$ is a finite lattice and all finite lattices
have a unique maximal element.
\end{proof}

For a finite lattice $P$, let $\Int P  := \big\{ [a,b] \subseteq P : a \leq b \big \}$ be its set of intervals.
The product order on $P \times P$ restricts to a partial order $\preceq$ on $\Int P$ as follows:
$[a, b] \preceq [c, d]$ if $a \leq c$ and $b \leq d$.
The join of two intervals is $[a,b] \vee [c,d] = [ a \vee c, b \vee d]$,
and the meet of two intervals is $[a,b] \wedge [c,d] = [a \wedge c, b \wedge d]$.
All this makes $\Int P$ a finite lattice.
Its bottom element is $[\bot, \bot]$ and its top element is $[\top, \top]$.

A bounded lattice function $\alpha : P \to Q$ between two finite lattices induces a bounded lattice
function $\Int \alpha : \Int P \to \Int Q$ as follows.
For an interval $[a,b] \in \Int P$, let $\Int \alpha \big( [a,b] \big) := \big[ \alpha(a), \alpha(b) \big]$.
We have
	\begin{align*}	
	\Int \alpha \big( [a,b] \wedge [c,d] \big) &= \Int \alpha \big( [a \wedge c, b \wedge d] \big) = 
	\big[ \alpha(a \wedge c) , \alpha (b \wedge d) \big] \\
	 &= 
	\big[ \alpha(a) \wedge \alpha(c) , \alpha(b) \wedge \alpha(d) \big]
	= \Int \alpha \big( [a,b] \big)  \wedge \Int \alpha \big( [c,d] \big). \\
	\Int \alpha \big( [a,b] \vee [c,d] \big) &= \Int \alpha \big( [a \vee c, b \vee d] \big) = 
	\big[ \alpha(a \vee c) , \alpha (b \vee d) \big] \\
	 &= 
	\big[ \alpha(a) \vee \alpha(c) , \alpha(b) \vee \alpha(d) \big]
	= \Int \alpha \big( [a,b] \big)  \vee \Int \alpha \big( [c,d] \big) \\
	\Int \alpha \big( [\bot, \bot] \big) &= \big[ \alpha(\bot), \alpha(\bot) \big] = [\bot, \bot] \\
	\Int \alpha \big( [\top, \top] \big) &= \big[ \alpha(\top), \alpha(\top) \big] = [\top, \top].
	\end{align*}
Thus $\Int \alpha$ is a bounded lattice function.
Further, for any pair of bounded lattice functions $\alpha : P \to Q$ and $\beta : Q \to R$,
$\Int (\beta \circ \alpha) = \Int \beta \circ \Int \alpha$.
All this makes $\Int$ an endofunctor on the category of finite lattices and bounded lattice functions.
To minimize notation, we will write~$\bar P$ for $\Int P$ and $\bar \alpha$ for $\Int \alpha$.

\subsection{Metric Lattices}
\label{sec:one.metric}

A \emph{finite (extended) metric lattice} is a tuple $(P, d_P)$ where $P$ is a finite lattice and
$d_P : P \times P \to \Rspace^{\geq 0} \cup \{\infty\}$ an extended metric.
A \emph{morphism of finite metric lattices} $\alpha : (P, d_P) \to (Q, d_Q)$
is a bounded lattice function $\alpha : P \to Q$.
The \emph{distortion} (see \cite[Definition 7.1.4]{burago2001course}) of a morphism $\alpha : (P, d_P) \to (Q, d_Q)$, 
denoted $|| \alpha ||$, is 
	$$ || \alpha || := \max_{a,b \in P} \big | d_P (a,b) - d_Q \big( \alpha(a), \alpha(b) \big) \big|.$$
Note that $|| \alpha ||$ might be infinite because the distance between any two points might 
be infinite.
To minimize notation, we will write finite metric lattices $(P, d_P)$ simply as $P$
with the implied metric $d_P$. 

\begin{ex}
\label{ex:one}
See Figure \ref{fig:lattice_map} for two examples of finite metric lattices $P$ and $Q$
and a morphism of finite metric lattices $\alpha : P \to Q$.
The distortion of $\alpha$ is $|| \alpha || = 1$.
Forthcoming examples will build on this one example.
	\begin{figure}
	\centering 
	\includegraphics{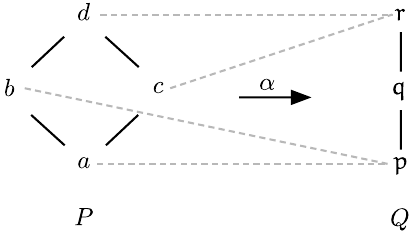}
	\caption{Hasse diagrams of two finite metric lattices $P$ and $Q$.
	The metrics $d_P$ and $d_Q$ assigns to every pair of elements the length (i.e.\ the number of
	edges) of the shortest path between them.
	For example, $d_P(a,d) = 2$ and $d_Q(p, q) = 1$.
	The function $\alpha : P \to Q$ defined as $\alpha(a) = \alpha(b) = p$
	and $\alpha(c) = \alpha(d) = r$ is a bounded lattice function.
	The distortion of $\alpha$ is $|| \alpha || = 1$.}
	\label{fig:lattice_map}
	\end{figure}
\end{ex}

For every finite metric lattice $P$, we have the finite metric lattice of intervals
$\bar P$ where 
$d_{\bar P}\big([a,b], [c,d] \big) := \max \big \{ d_P(a,c), d_P (b,d) \big \}.$
A morphism $\alpha : P \to Q$ of finite metric lattices induces a morphism
of finite metric lattices $\bar \alpha : \bar P  \to \bar Q$.
The distortion of $\bar \alpha$ is
$$ || \bar \alpha || := \max_{ [a,b], [c,d] \in \bar P} \big| \max \big\{ d_P(a,c), d_P(b,d) \big\} - 
\max \big\{ d_Q (\alpha(a), \alpha(c)), d_Q (\alpha(b), \alpha(d)) \big\} \big |.$$
Proposition \ref{prop:distortion} says that the two distortions $|| \alpha ||$ and $|| \bar \alpha ||$
are equal. Its proof requires the following lemma.

\begin{lem}{\cite[Lemma 3 page 31]{hierarchical_clustering}}
\label{lem:inequality}
For all non-negative real numbers $w, x, y, z \in \Rspace^{\geq 0}$, 
	$$ \big | \max(w,x) - \max(y,z) \big | \leq \max \big( |w-y|, |x-z| \big).$$
\end{lem}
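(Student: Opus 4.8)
The plan is to bound $\max(w,x)$ from above by $\max(y,z)$ plus the right-hand side, and then exploit the symmetry of the statement under exchanging the two pairs. First I would record the elementary fact that $t \leq s + |t-s|$ for all reals $t,s$; applying it gives $w \leq y + |w-y|$ and $x \leq z + |x-z|$. Since $y, z \leq \max(y,z)$ and $|w-y|, |x-z| \leq \max\big(|w-y|,|x-z|\big)$, both of these right-hand sides are bounded by $\max(y,z) + \max\big(|w-y|,|x-z|\big)$. Taking the maximum of the two left-hand sides then yields
$$\max(w,x) \leq \max(y,z) + \max\big(|w-y|, |x-z|\big),$$
that is, $\max(w,x) - \max(y,z) \leq \max\big(|w-y|,|x-z|\big)$.

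Next I would observe that the statement is unchanged if we simultaneously swap $w \leftrightarrow y$ and $x \leftrightarrow z$: the quantity inside the absolute value on the left only changes sign, and the right-hand side $\max\big(|w-y|,|x-z|\big)$ is literally the same expression. Applying the displayed inequality to the swapped data therefore gives $\max(y,z) - \max(w,x) \leq \max\big(|w-y|,|x-z|\big)$. Combining the two one-sided estimates produces $\big|\max(w,x) - \max(y,z)\big| \leq \max\big(|w-y|,|x-z|\big)$, which is the claim.

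There is essentially no obstacle here; the only things to be careful about are noting the triangle-type inequality $t \leq s + |t-s|$ and checking that $\max\big(|w-y|,|x-z|\big)$ is genuinely invariant under the pair-swap, so that a single one-sided bound suffices once symmetrized. This is the inequality cited from \cite{hierarchical_clustering}; I include the short argument for completeness, remarking in passing that non-negativity of $w,x,y,z$ is not actually used.
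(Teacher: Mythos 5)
Your argument is correct: the one-sided bound $\max(w,x)\leq\max(y,z)+\max\big(|w-y|,|x-z|\big)$ follows exactly as you describe, and the symmetry under swapping $(w,x)\leftrightarrow(y,z)$ gives the other direction. The paper itself offers no proof of this lemma (it is imported by citation), so there is nothing to compare against; your proof is the standard one, and you are also right that non-negativity of $w,x,y,z$ plays no role.
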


\begin{prop}
\label{prop:distortion}
Let $\alpha : P \to Q$ be a bounded lattice function between two finite metric lattices and let
$\bar \alpha : \bar P \to \bar Q$ be the induced bounded lattice function on intervals.
Then $|| \bar \alpha || = || \alpha ||$. 
\end{prop}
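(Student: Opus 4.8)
The plan is to prove the two distortions are equal by showing each is bounded by the other. One inequality is almost immediate; the other is the substantive part and uses Lemma \ref{lem:inequality}.

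First I would establish $\|\bar\alpha\| \geq \|\alpha\|$. This is the easy direction: every element $a \in P$ embeds as the interval $[a,a] \in \bar P$, and $d_{\bar P}([a,a],[b,b]) = \max\{d_P(a,b), d_P(a,b)\} = d_P(a,b)$, while $d_{\bar Q}(\bar\alpha[a,a], \bar\alpha[b,b]) = d_{\bar Q}([\alpha(a),\alpha(a)],[\alpha(b),\alpha(b)]) = d_Q(\alpha(a),\alpha(b))$. Hence for every pair $a,b \in P$ the quantity $|d_P(a,b) - d_Q(\alpha(a),\alpha(b))|$ appears among the terms defining $\|\bar\alpha\|$, so the maximum defining $\|\bar\alpha\|$ is at least $\|\alpha\|$.

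Next I would establish $\|\bar\alpha\| \leq \|\alpha\|$, which is where the real work lies. Fix two intervals $[a,b],[c,d] \in \bar P$. I want to bound
$$\Big| \max\big\{ d_P(a,c), d_P(b,d)\big\} - \max\big\{ d_Q(\alpha(a),\alpha(c)), d_Q(\alpha(b),\alpha(d))\big\} \Big|.$$
Applying Lemma \ref{lem:inequality} with $w = d_P(a,c)$, $x = d_P(b,d)$, $y = d_Q(\alpha(a),\alpha(c))$, $z = d_Q(\alpha(b),\alpha(d))$, this is at most
$$\max\Big\{ \big| d_P(a,c) - d_Q(\alpha(a),\alpha(c))\big|,\ \big| d_P(b,d) - d_Q(\alpha(b),\alpha(d))\big| \Big\}.$$
Each of the two terms inside this last maximum is of the form $|d_P(x,y) - d_Q(\alpha(x),\alpha(y))|$ for a pair of elements of $P$, hence is at most $\|\alpha\|$ by definition of distortion. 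Therefore the displayed quantity is $\leq \|\alpha\|$ for every choice of intervals $[a,b],[c,d]$, and taking the maximum over all such pairs gives $\|\bar\alpha\| \leq \|\alpha\|$.

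Combining the two inequalities yields $\|\bar\alpha\| = \|\alpha\|$. The one subtlety to be careful about is that $\|\alpha\|$ (and hence $\|\bar\alpha\|$) may be $\infty$, since the metrics are extended; but the inequalities above hold verbatim in $\Rspace^{\geq 0}\cup\{\infty\}$ with the usual conventions, so no separate case analysis is needed. The only genuine obstacle is locating the right substitution into Lemma \ref{lem:inequality}, and that is immediate from the formula for $d_{\bar P}$.
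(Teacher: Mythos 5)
Your proof is correct and follows essentially the same route as the paper: the lower bound via the degenerate intervals $[a,a]$, $[b,b]$, and the upper bound via Lemma \ref{lem:inequality} with exactly the substitution you identify. Your remark about the extended-metric case is a reasonable extra precaution but not needed beyond what the paper already does.
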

\begin{proof}
First we show $|| \bar \alpha || \geq || \alpha ||$.
If $|| \alpha || = \ee$, then there are elements $a,b\in P$ such that
$\ee = \big | d_P(a,b) - d_Q (\alpha(a), \alpha(b)) \big|$.
For the intervals $[a,a]$ and $[b,b]$, we have
$$\big | \max \big \{ d_P(a,b), d_P (a,b) \big \} - \max \big \{ d_Q(\alpha(a), \alpha(b)), 
d_Q (\alpha(a), \alpha(b)) \big \} \big | = \ee$$
proving the claim.
Now we show $|| \bar \alpha || \leq || \alpha ||$ using Lemma \ref{lem:inequality}:
	\begin{align*}
	|| \bar \alpha || & := \max_{ [a,b], [c,d] \in \bar P} \big| \max \big\{ d_P(a,c), d_P(b,d) \big\} - 
\max \big\{ d_Q (\alpha(a), \alpha(c)), d_Q (\alpha(b), \alpha(d)) \big\} \big | \\
	& \leq \max_{ [a,b], [c,d] \in \bar P} \big \{ \big | d_P(a,c) - d_Q \big (\alpha(a), \alpha(c) \big) \big | , 
	\big | d_P (b,d) - d_Q\big(\alpha(b), \alpha(d)\big) \big|  \\
	& = || \alpha ||.
	\end{align*}
\end{proof}

\begin{ex}
\label{ex:three}
The morphism of finite metric lattices $\alpha : P \to Q$ in Example~\ref{ex:one} 
induces the morphism of finite metric lattices $\bar \alpha : \bar P \to  \bar Q$ 
in Figure \ref{fig:int_map}.
The distortion of $\bar \alpha$ is $|| \bar \alpha || = 1$.
	\begin{figure}
	\centering
	\includegraphics{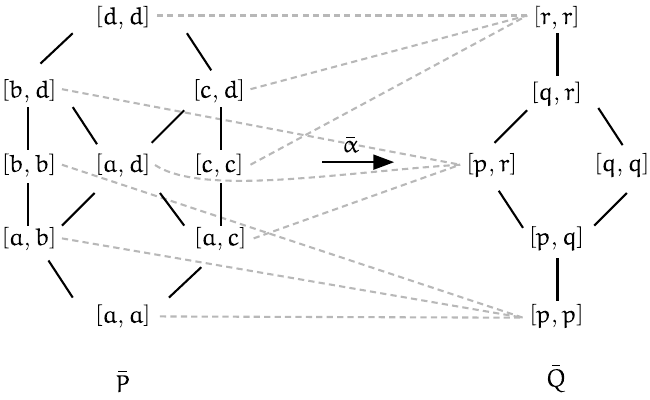}
	\caption{Hasse diagrams of the lattices $\bar P$ 
	and $\bar Q$ where
	$P$ and $Q$ are from Example~\ref{ex:one}.
	The morphism $\alpha : P \to Q$ from the same example
	extends to the morphism $\bar \alpha : \bar P \to \bar Q$
	as follows.
	The function $\bar \alpha$ sends $\big\{ [a,a], [a,b], [b,b] \big \}$ to $\big\{ [p,p] \big \}$, 
	$\big\{ [a,c], [a,d], [b,d] \big \}$
	to $\big \{ [p,r] \big \}$, and $\big \{ [c,c], [c,d], [d,d] \big \}$ to $\big \{ [r,r] \big \}$.
	The distortion of $\bar \alpha$ is $|| \bar \alpha || = || \alpha || = 1$.}
	\label{fig:int_map}
	\end{figure}
\end{ex}

\section{Filtrations}
\label{sec:two}
We now consider filtrations of a fixed finite simplicial complex indexed by finite metric lattices.
Fix a finite simplicial complex $K$ and denote by $\Delta K$ the category 
consisting of all subcomplexes $A \subseteq K$ as its objects and inclusions $A \mono B$ as morphisms.

\begin{defn}
Let $P$ be a finite metric lattice and $K$ a finite simplicial complex.
A filtration of $K$ indexed by $P$, or simply a 
\define{$P$-filtration of $K$},
is a functor $\Ffunc : P \to \Delta K$.
That is, for all $a \in P$, $\Ffunc(a)$ is a subcomplex of $K$ and for all $a \leq b$, $\Ffunc(a \leq b)$
is the inclusion of $\Ffunc(a)$ into $\Ffunc(b)$.
Further, we require that $\Ffunc(\top) = K$.
\end{defn}

\begin{defn}
A \define{filtration-preserving morphism} is a triple $(\Ffunc, \Gfunc, \alpha)$
where $\Ffunc : P \to \Delta K$ and $\Gfunc : Q \to \Delta K$ are $P$ and $Q$-filtrations of
$K$, respectively, and $\alpha : P \to Q$
is a bounded lattice function satisfying the following axiom.
For all $a \in Q$, $\Gfunc(a) = \Ffunc(a^\star)$ where $a^\star := \max \alpha^{-1}[\bot, a]$:
	\begin{equation*}
	\begin{tikzcd}
	P \ar[rr, "\alpha" ""{name=U, below}] \ar[rd,  "\Ffunc"] && Q \ar[ld, "\Gfunc"] \\
	& \Delta K .  & 
	\end{tikzcd}
	\end{equation*}
\end{defn}

\begin{rmk}
A more sophisticated but an equivalent definition of a filtration-preserving morphism is the following.
A filtration-preserving morphism is a triple $(\Ffunc, \Gfunc, \alpha)$
where $\Ffunc : P \to \Delta K$ and $\Gfunc : Q \to \Delta K$ are $P$ and $Q$-filtrations of
$K$, respectively, and $\alpha : P \to Q$
is a bounded lattice function such that
$G$ is the left Kan extension of $\Ffunc$ along $\alpha$, written $\Gfunc = \Lan_\alpha \Ffunc$:
	\begin{equation*}
	\begin{tikzcd}
	P \ar[rr, "\Ffunc" ""{name=U, below}] \ar[rd,  "\alpha"' ] && \Delta K \\
	& Q. \arrow[Rightarrow, from=U, "\mu"] \ar[ru, dashrightarrow, "\Gfunc = \Lan_\alpha \Ffunc "'] & 
	\end{tikzcd}
	\end{equation*}
By construction of the left Kan extension,
$$\Lan_\alpha \Ffunc (a) := \colim_{\Delta K} \Ffunc |_{\alpha^{-1} [\bot,a] }$$
for all $a \in Q$.
By Proposition \ref{prop:maximal}, $\alpha^{-1} [\bot, a]$ has a maximal element
$a^\star$ and therefore $\Lan_\alpha \Ffunc(a)$ is equal to $\Ffunc(a^\star)$.
For all $a \leq b$ in $Q$, $a^\star \leq b^\star$ inducing the inclusion $\Lan_\alpha \Ffunc(a \leq b)$.
The natural transformation $\mu : \Ffunc \Rightarrow \Gfunc \circ \alpha$ is gotten as follows.
For $c \in P$, let $a := \alpha(c)$.
Since $c \leq a^\star$ and $\Lan_\alpha \Ffunc(a)$ is equal to $\Ffunc(a^\star)$, 
we get the inclusion $\mu(c) : \Ffunc(c) \mono \Gfunc \circ \alpha(c) = \Gfunc(a)$.
\end{rmk}

\begin{rmk}
A zigzag of filtration-preserving morphisms categorifies the notion of a \emph{transposition}
introduced in \cite{vineyards}. 
Consider
two filtration-preserving morphisms $(\Ffunc, \Gfunc, \alpha)$ and $(\Hfunc, \Gfunc, \beta)$:
	\begin{equation*}
	\begin{tikzcd}
	P \ar[r, "\alpha" ""{name=U, below}] \ar[rd,  "\Ffunc"] & Q \ar[d, "\Gfunc"] & R \ar[ld, "\Hfunc"] \ar[l, "\beta"'] \\
	& \Delta K .  & 
	\end{tikzcd}
	\end{equation*}
Suppose for $q \in Q$, both $\alpha^{-1}(q)$ and $\beta^{-1}(q)$ are nonempty.
Then the simplices in $K$ that appear in the filtration $\Ffunc$ restricted to $\alpha^{-1}(q)$
appear at once in $\Gfunc$ at $q$.
Further, the same simplices that appear in $\Ffunc$ restricted
to $\alpha^{-1}(q)$ appear in $\Hfunc$ restricted to $\beta^{-1}(q)$ albeit in a 
possibly different order.
The two morphisms 
$(\Ffunc, \Gfunc, \alpha)$ and $(\Hfunc, \Gfunc, \beta)$ are together a generalization
of the notion of a \emph{transposition}.
\end{rmk}

\begin{ex}
\label{ex:two}
Let $\alpha : P \to Q$ be the bounded lattice function described in Example \ref{ex:one}.
Consider the two filtrations $\Ffunc : P \to \Delta K$ and $\Gfunc : Q \to \Delta K$ of the $2$-simplex $K$
in Figure \ref{fig:fil_map}.
The triple $(\Ffunc, \Gfunc, \alpha)$ is a filtration-preserving morphism $\alpha : \Ffunc \to \Gfunc$.
	\begin{figure}
	\centering \includegraphics{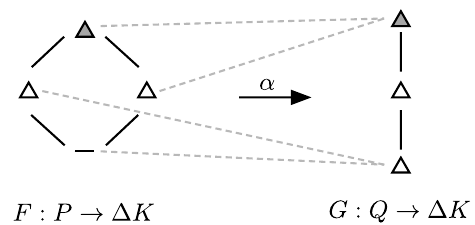}
	\caption{Filtrations $\Ffunc$ and $\Gfunc$ of the $2$-simplex along with
	a filtration-preserving morphism $\alpha$ as described in Example \ref{ex:one}.}
	\label{fig:fil_map}
	\end{figure}
\end{ex}

\begin{prop}
\label{prop:filtration_composition}
If $(\Ffunc, \Gfunc, \alpha)$ and $(\Gfunc, \Hfunc, \beta)$ 
are filtration-preserving morphisms, 
then $(\Ffunc, \Hfunc, \beta \circ \alpha)$ is a filtration-preserving morphism.
\end{prop}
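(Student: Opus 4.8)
The plan is to verify the defining axiom of a filtration-preserving morphism directly for the triple $(\Ffunc, \Hfunc, \beta \circ \alpha)$, feeding it with the axioms for the two given morphisms. First, being a composite of bounded lattice functions, $\beta \circ \alpha : P \to R$ is itself a bounded lattice function (this is exactly what makes finite lattices and bounded lattice functions a category). Hence, by Proposition \ref{prop:maximal}, the pre-image $(\beta \circ \alpha)^{-1}[\bot, b]$ has a unique maximal element for every $b \in R$; write $b^\sharp$ for it. The goal is then to show $\Hfunc(b) = \Ffunc(b^\sharp)$ for all $b \in R$, since $\Ffunc : P \to \Delta K$ and $\Hfunc : R \to \Delta K$ are already $P$- and $R$-filtrations of $K$ by hypothesis.

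The crux is the pre-image identity $(\beta \circ \alpha)^{-1}[\bot, b] = \alpha^{-1}[\bot, b^\dagger]$, where $b^\dagger := \max \beta^{-1}[\bot, b]$ (which exists by Proposition \ref{prop:maximal} applied to $\beta$ at $b$). To see this, observe that $\beta$ is monotone, so $\beta^{-1}[\bot, b]$ is downward closed in $Q$; since it also has a greatest element $b^\dagger$, it is literally the principal down-set $[\bot, b^\dagger]$. Therefore, for $c \in P$, we have $\beta(\alpha(c)) \leq b$ if and only if $\alpha(c) \in \beta^{-1}[\bot, b] = [\bot, b^\dagger]$, i.e.\ if and only if $\alpha(c) \leq b^\dagger$, which is precisely the claimed equality of pre-images. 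Taking maxima of both sides yields $b^\sharp = \max \alpha^{-1}[\bot, b^\dagger] = (b^\dagger)^\star$, in the notation of the definition applied to $\alpha$ at the element $b^\dagger \in Q$.

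With this, the conclusion is immediate: the axiom for $(\Ffunc, \Gfunc, \alpha)$ evaluated at $b^\dagger \in Q$ gives $\Ffunc(b^\sharp) = \Ffunc\big((b^\dagger)^\star\big) = \Gfunc(b^\dagger)$, and the axiom for $(\Gfunc, \Hfunc, \beta)$ evaluated at $b \in R$ gives $\Gfunc(b^\dagger) = \Hfunc(b)$. Chaining the two equalities gives $\Ffunc(b^\sharp) = \Hfunc(b)$ for every $b \in R$, which is exactly the axiom for $(\Ffunc, \Hfunc, \beta \circ \alpha)$.

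I do not expect a genuine obstacle; the only thing to be careful about is the bookkeeping of the nested pre-images, namely recognising that $\beta^{-1}[\bot, b]$ is not merely a subset with a top element but the full principal down-set $[\bot, b^\dagger]$, so that the two descriptions of $(\beta \circ \alpha)^{-1}[\bot, b]$ agree on the nose rather than merely up to cofinality. Equivalently, one could run the argument through the Kan-extension description from the first remark, using that left Kan extensions compose strictly when all categories involved are posets, so that $\Lan_{\beta \circ \alpha} \Ffunc = \Lan_\beta(\Lan_\alpha \Ffunc) = \Lan_\beta \Gfunc = \Hfunc$; but the elementary pre-image computation above is shorter and keeps the section self-contained.
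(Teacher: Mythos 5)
Your proof is correct and follows essentially the same route as the paper's: compose the two axioms by identifying $\max(\beta\circ\alpha)^{-1}[\bot,b]$ with $(b^\dagger)^\star$. In fact you supply the one detail the paper merely asserts, namely that $\beta^{-1}[\bot,b]$ is the principal down-set $[\bot,b^\dagger]$ so the nested and composite pre-images coincide.
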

\begin{proof}
Suppose $\Ffunc : P \to \Delta K$, $\Gfunc : Q \to \Delta K$, 
and $\Hfunc : R \to \Delta K$.
For all $a \in R$, $\Hfunc(a) = \Gfunc(a^\star)$ where $a^\star := \max \beta^{-1}[\bot, a]$.
Furthermore, $\Gfunc(a^\star) = \Ffunc(a^{\star \star})$ where $a^{\star \star} := \max \alpha^{-1}[\bot, a^\star]$.
Since $a^{\star \star} = \max (\beta \circ \alpha)^{-1} [\bot, a]$, we have that
$\Hfunc(a) = \Ffunc(a^{\star \star})$.
Thus $(\Ffunc, \Hfunc, \beta \circ \alpha)$ is a filtration-preserving morphism.
\end{proof}

\begin{defn}
Fix a finite simplicial complex $K$. 
Let $\Fil(K)$ be the category whose objects are $P$-filtrations of $K$, 
over all finite metric lattices~$P$, and whose morphisms are filtration-preserving morphisms.
We call $\Fil(K)$ the \define{category of filtrations of $K$}.
\end{defn}

There are ways to relate two filtration categories.
A simplicial map $f : K \to L$ induces a push-forward functor $f_\ast : \Fil(K) \to \Fil(L)$
and a pull-back functor $f^{\ast} : \Fil(L) \to \Fil(K)$.
Unfortunately, we do not need these functors.

\section{Monotone Integral Functions}
\label{sec:three}

We now define the category of monotone integral functions over finite metric lattices $\Mon$
and construct the birth-death functor $\BD_\ast : \Fil(K) \to \Mon$.
Let $\Z$ be the poset of integers with the usual total ordering $\leq$.

\begin{defn}
Let $P$ and $Q$ be two finite metric lattices and let $f : \bar P \to \Z$ and $g :\bar  Q \to \Z$
be two monotone integral functions on their lattice of intervals.
A \define{monotone-preserving morphism} from $f$ to $g$ is a triple $(f, g, \bar \alpha)$ where 
$f : \bar P \to \Z$ and $g : \bar Q \to \Z$ are monotone functions and 
$\bar \alpha : \bar P \to \bar Q$ is a bounded lattice function induced by a bounded
lattice function $\alpha : P \to Q$ satisfying the following axiom.
For all $I \in \bar Q$ and $I^\star := \max \bar \alpha^{-1} [\bot, I]$, $g(I) = f(I^\star)$:
	\begin{equation*}
	\begin{tikzcd}
	\bar P \ar[rd, "f" ] \ar[rr, " \bar \alpha"] && \bar Q \ar[ld, "g"] \\
	& \Z.   & 
	\end{tikzcd}
	\end{equation*}
\end{defn}

Note that if $(f, g, \bar \alpha)$ is a monotone-preserving morphism, then
$f [\top, \top] = g [\top, \top]$.

\begin{rmk}
A more sophisticated but an equivalent definition of a monotone-preserving morphism is the following.
A \define{monotone-preserving morphism} is a triple $(f, g, \bar \alpha)$ where 
$f : \bar P \to \Z$ and $g : \bar Q \to \Z$ are monotone functions and 
$\bar \alpha : \bar P \to \bar Q$ is a bounded lattice function induced by a bounded lattice
function $\alpha : P \to Q$ such that
$g$ is the left Kan extension of $f$ along $ \alpha$, written~$g = \Lan_{ \alpha} f$:
	\begin{equation*}
	\begin{tikzcd}
	\bar P \ar[rr, "f" ""{name=U, below}] \ar[rd, " \bar \alpha"' ] && \Z \\
	& \bar Q. \arrow[Rightarrow, from=U, "\mu"] \ar[ru, dashrightarrow, "g = \Lan_{ \bar \alpha} f"'] & 
	\end{tikzcd}
	\end{equation*}
\end{rmk}

\begin{ex}
\label{ex:four}
See Figure \ref{fig:mon_map} for an example of monotone integral functions $f$ and $g$ 
on the lattices $\bar P$ and $\bar Q$, respectively, from Example \ref{ex:three}. 
The triple $(f, g, \bar \alpha)$, where $\bar \alpha : \bar P \to \bar Q$ is from the same example, 
is a monotone-preserving morphism.
	\begin{figure}
	\centering
	\includegraphics{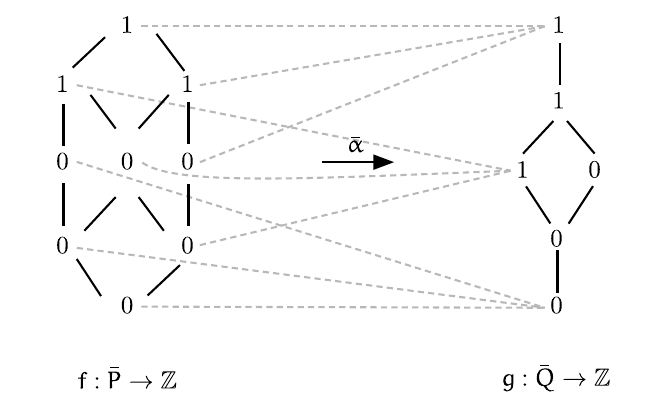}
	\caption{Two monotone integral functions $f$ and $g$ on the metric lattices 
	$\bar P$ and $\bar Q$ from Example~\ref{ex:three}.
	The triple $(f, g, \bar \alpha)$, where $\bar \alpha : \bar P \to \bar Q$ is from the same example,
	is a monotone-preserving morphism from $f$ to $g$.}
	\label{fig:mon_map}
	\end{figure}
\end{ex}

\begin{prop}
\label{prop:monotone_composition}
If $(f,g, \bar \alpha)$ and $(g, h, \bar \beta)$ are monotone-preserving morphisms, 
then $(f, h, \bar \beta \circ \bar \alpha)$ is a monotone-preserving morphism.
\end{prop}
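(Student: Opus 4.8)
The plan is to mirror the proof of Proposition~\ref{prop:filtration_composition}, replacing $\Delta K$ by $\Z$ and the filtrations by the monotone functions $f$, $g$, $h$. Two ingredients drive the argument: functoriality of the interval endofunctor $\Int$, and the elementary fact that the pre-image of a down-set under a bounded lattice function between finite lattices is again a down-set.

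Suppose $f : \bar P \to \Z$, $g : \bar Q \to \Z$, $h : \bar R \to \Z$, with $\bar\alpha$ induced by $\alpha : P \to Q$ and $\bar\beta$ induced by $\beta : Q \to R$. First I would note that $\beta \circ \alpha : P \to R$ is again a bounded lattice function (composites of bounded lattice functions are such), and that $\Int(\beta\circ\alpha) = \Int\beta \circ \Int\alpha$, i.e.\ $\bar\beta\circ\bar\alpha$ is precisely the bounded lattice function $\bar P \to \bar R$ induced by $\beta\circ\alpha$. So the candidate triple $(f, h, \bar\beta\circ\bar\alpha)$ has the right shape, and it remains only to verify the defining axiom.

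For the axiom, the key step is the following observation, applied twice. If $\gamma : A \to B$ is a bounded lattice function between finite lattices and $b \in B$, write $b^\star := \max \gamma^{-1}[\bot, b]$, which exists by Proposition~\ref{prop:maximal}; then $\gamma^{-1}[\bot, b] = [\bot, b^\star]$. The inclusion $[\bot, b^\star] \subseteq \gamma^{-1}[\bot, b]$ holds because $\gamma$ is monotone and $\gamma(b^\star) \leq b$, and the reverse inclusion holds because $b^\star$ is the maximum of $\gamma^{-1}[\bot, b]$. Now fix $I \in \bar R$ and set $I^\star := \max \bar\beta^{-1}[\bot, I]$ and $I^{\star\star} := \max \bar\alpha^{-1}[\bot, I^\star]$. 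Then
	\begin{equation*}
	(\bar\beta \circ \bar\alpha)^{-1}[\bot, I] = \bar\alpha^{-1}\big( \bar\beta^{-1}[\bot, I] \big) = \bar\alpha^{-1}[\bot, I^\star],
	\end{equation*}
whose maximum is $I^{\star\star}$; that is, $\max(\bar\beta\circ\bar\alpha)^{-1}[\bot, I] = I^{\star\star}$. Applying the axiom for $(g, h, \bar\beta)$ and then the axiom for $(f, g, \bar\alpha)$ at $I^\star \in \bar Q$ yields $h(I) = g(I^\star) = f(I^{\star\star})$, which is exactly the axiom for $(f, h, \bar\beta\circ\bar\alpha)$.

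I do not expect a genuine obstacle: once the down-set pre-image identity is isolated, the remainder is formal bookkeeping, exactly as in Proposition~\ref{prop:filtration_composition}. The step requiring the most care is making that identity precise and invoking functoriality of $\Int$ (already established when $\Int$ was introduced) to identify $\bar\beta\circ\bar\alpha$ with the interval functor applied to $\beta\circ\alpha$.
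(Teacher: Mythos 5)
Your proposal is correct and follows essentially the same route as the paper's proof: fix $I \in \bar R$, chase $h(I) = g(I^\star) = f(I^{\star\star})$, and identify $I^{\star\star}$ with $\max(\bar\beta\circ\bar\alpha)^{-1}[\bot,I]$. You actually supply more detail than the paper does (the down-set identity $\gamma^{-1}[\bot,b]=[\bot,b^\star]$ and the appeal to functoriality of $\Int$ are left implicit there), but the argument is the same.
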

\begin{proof}
Suppose $f : \bar P \to \Z$, $g : \bar Q \to \Z$, 
and $h : \bar R \to \Z$.
For all $I \in \bar R$, $h(I) = g(I^\star)$ where $I^\star := \max \bar \beta^{-1}[\bot, I]$.
Furthermore, $g(I^\star) = f(I^{\star \star})$ where $I^{\star \star} := \max \bar \alpha^{-1}[\bot, I^\star]$.
Since $I^{\star \star} = \max (\bar \beta \circ \bar \alpha)^{-1} [\bot, I]$, we have that
$h(I) = f(I^{\star \star})$.
Thus the composition $(f, h, \bar \beta \circ \bar \alpha)$ is a monotone-preserving morphism.
\end{proof}

\begin{defn}
Let $\Mon$ be the category consisting of monotone integral functions $f : \bar P \to \Z$, over all
finite metric lattices $P$, and monotone-preserving morphisms.
We call $\Mon$ the \define{category of  monotone functions}.
\end{defn}

\subsection{Birth-Death Functor}

Fix a field $\field$.
Let $\Vec$ be the category of finite-dimensional $\field$-vector spaces and 
$\Ch(\Vec)$ the category of chain complexes over $\Vec$.
Let $\Cfunc_\bullet : \Delta K \to \Ch(\Vec)$ be the functor that assigns 
to every subcomplex its simplicial chain complex
and to every inclusion of subcomplexes the induced inclusion of chain complexes.
For every object $\Ffunc : P \to \Delta K$ in $\Fil(K)$, we get a $P$-filtered
chain complex $\Cfunc_\bullet \Ffunc : P \to \Ch(\Vec)$ whose total
chain complex is $\Cfunc_\bullet \Ffunc (\top)$.
For all dimensions $i$, denote by 
$\Zfunc_i \Ffunc : P \to \Vec$
the functor that assigns to every $a \in P$ the subspace of $i$-cycles in
$\Cfunc_\bullet \Ffunc(a)$ and assigns to every $a \leq b$ the canonical inclusion
of $\Zfunc_i \Ffunc(a) \mono \Zfunc_i \Ffunc(b)$.
For all dimensions $i$, denote by $\Bfunc_i \Ffunc : P \to \Vec$
the functor that assigns to every $a \in P$ the subspace of $i$-boundaries in
$\Cfunc_\bullet \Ffunc(a)$ and to all $a \leq b$ the canonical inclusion
of $\Bfunc_i \Ffunc(a) \mono \Bfunc_i \Ffunc(b)$.
In summary, for all $a \leq b$ in $P$, we have following commutative diagram
of inclusions between cycles and boundaries:
	\begin{equation*}
	\begin{tikzcd}
	\Bfunc_i \Ffunc(a) \ar[r, hookrightarrow] \ar[d, hookrightarrow]
	& \Bfunc_i \Ffunc(b) \ar[r, hookrightarrow]  
	\ar[d, hookrightarrow] & \Bfunc_i \Ffunc(\top) \ar[d, hookrightarrow] \\
	\Zfunc_i \Ffunc(a) \ar[r, hookrightarrow] & \Zfunc_i \Ffunc(b) \ar[r, hookrightarrow]
	& \Zfunc_i \Ffunc(\top).
	\end{tikzcd}
	\end{equation*}

\begin{defn}
Let $\Ffunc : P \to \Delta K$ be an object of $\Fil(K)$.
For every interval $[a, b] \in \bar P$, where $b \neq \top$,  let
$$\BD_i \Ffunc [a,b] := \dim \big( \Zfunc_i \Ffunc(a) \cap \Bfunc_i \Ffunc(b) \big)$$
where the intersection is taken inside $\Zfunc_i \Ffunc(\top)$.
For all other intervals $[a, \top]$, let 
$$\BD_i \Ffunc [a,\top] := \dim \Zfunc_i \Ffunc(a).$$
The \define{$i$-th birth-death function} of $\Ffunc$ is the function $f_i : \bar P \to \Z$
that assigns to every interval $[a,b]$ the integer $\BD_i \Ffunc [a,b]$.
\end{defn}

The reason we force $\BD_i \Ffunc [a,\top]$ to $\dim \Zfunc_i \Ffunc(a)$ 
instead of $\dim \Zfunc_i \Ffunc(a) \cap \Bfunc_i \Ffunc (\top)$ is because we want all cycles
to be dead by $\top$.
Otherwise, the persistence diagram for $\Ffunc$ (see Definition~\ref{defn:diagram})  would not see cycles
that are born and never die.

\begin{prop}
Let $\Ffunc : P \to \Delta K$ be an object in $\Fil(K)$ and
$f_i : \bar P \to \Z$ its $i$-th birth-death function.
Then $f_i$ is monotone.
\label{prop:filtomono}
\end{prop}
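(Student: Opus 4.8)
The plan is to show monotonicity of $f_i = \BD_i \Ffunc$ directly from the definition by checking it on a covering relation, i.e.\ it suffices to show $f_i[a,b] \leq f_i[c,d]$ whenever $[a,b] \preceq [c,d]$ in $\bar P$, which by transitivity reduces to the two ``generating'' cases $a \leq c$ with $b = d$, and $a = c$ with $b \leq d$. I would organize the argument around the two clauses in the definition of $\BD_i$, treating the intervals $[a,\top]$ separately.

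First I would handle the case $b = d \neq \top$ and $a \leq c$. Here $f_i[a,b] = \dim\big(\Zfunc_i\Ffunc(a) \cap \Bfunc_i\Ffunc(b)\big)$ and $f_i[c,b] = \dim\big(\Zfunc_i\Ffunc(c) \cap \Bfunc_i\Ffunc(b)\big)$, both intersections taken inside $\Zfunc_i\Ffunc(\top)$. Since $a \leq c$, the inclusion $\Zfunc_i\Ffunc(a) \mono \Zfunc_i\Ffunc(c)$ gives $\Zfunc_i\Ffunc(a) \cap \Bfunc_i\Ffunc(b) \subseteq \Zfunc_i\Ffunc(c) \cap \Bfunc_i\Ffunc(b)$, so the dimensions compare the right way. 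Next, the case $a = c$ and $b \leq d \neq \top$ is symmetric: $\Bfunc_i\Ffunc(b) \subseteq \Bfunc_i\Ffunc(d)$ gives $\Zfunc_i\Ffunc(a) \cap \Bfunc_i\Ffunc(b) \subseteq \Zfunc_i\Ffunc(a) \cap \Bfunc_i\Ffunc(d)$, again yielding the inequality on dimensions.

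It remains to compare intervals of the form $[a,b]$ with $b \neq \top$ against intervals $[c,\top]$; by the previous paragraph I may assume $[a,b] \preceq [a,\top]$, i.e.\ I must show $\BD_i\Ffunc[a,b] \leq \BD_i\Ffunc[a,\top] = \dim\Zfunc_i\Ffunc(a)$. This is immediate because $\Zfunc_i\Ffunc(a) \cap \Bfunc_i\Ffunc(b)$ is a subspace of $\Zfunc_i\Ffunc(a)$, so its dimension is at most $\dim\Zfunc_i\Ffunc(a)$. (I should also note the degenerate point: if $a \leq c$ then $[a,\top] \preceq [c,\top]$ and $\dim\Zfunc_i\Ffunc(a) \leq \dim\Zfunc_i\Ffunc(c)$ from the inclusion of cycle spaces, so monotonicity holds among the top-intervals too.) Assembling these cases along a maximal chain in $\bar P$ from $[a,b]$ to $[c,d]$ gives $f_i[a,b] \leq f_i[c,d]$ in general.

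The only mild subtlety — and the one place requiring care rather than routine bookkeeping — is that the two clauses in the definition of $\BD_i$ are not uniform, so I cannot simply write one monotone formula; I must make sure the mixed comparison $[a,b] \preceq [c,\top]$ with $b \neq \top$ is handled, which is exactly the content of the third paragraph. Everything else is a one-line containment of subspaces of the fixed ambient space $\Zfunc_i\Ffunc(\top)$ together with the monotonicity of $\dim$ under inclusion. No deeper structure (e.g.\ exactness, the lattice structure of $P$, or properties of $\bar\alpha$) is needed for this particular statement.
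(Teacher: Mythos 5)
Your proof is correct and rests on the same core argument as the paper's: containments of subspaces of the fixed ambient space $\Zfunc_i\Ffunc(\top)$ together with monotonicity of $\dim$, with the intervals $[c,\top]$ handled separately via $\Zfunc_i\Ffunc(a)\cap\Bfunc_i\Ffunc(b)\subseteq\Zfunc_i\Ffunc(a)\subseteq\Zfunc_i\Ffunc(c)$. The reduction to covering relations is unnecessary overhead (and note it only works if you move the right endpoint first, since $[c,b]$ need not be an interval when $c\not\leq b$, whereas $[a,d]$ always is); the paper simply observes $\Zfunc_i\Ffunc(a)\cap\Bfunc_i\Ffunc(b)\subseteq\Zfunc_i\Ffunc(c)\cap\Bfunc_i\Ffunc(d)$ directly for any $[a,b]\preceq[c,d]$ with $d\neq\top$.
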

\begin{proof}
For any two intervals $I \preceq J$ in $\bar P$, we must show that
$f_i(I) \leq f_i(J)$.
Suppose $I = [a,b]$ and $J = [c,d]$ and $d \neq \top$.
Then $\Zfunc_i \Ffunc(a) \subseteq \Zfunc_i \Ffunc(c)$ and 
$\Bfunc_i \Ffunc(b) \subseteq \Bfunc_i \Ffunc(d)$.
Thus $\Zfunc_i \Ffunc(a) \cap \Bfunc_i \Ffunc(b)$
is a subspace of $\Zfunc_i \Ffunc(c) \cap \Bfunc_i \Ffunc(d)$,
and therefore $\BD_i \Ffunc [a,b] \leq \BD_i \Ffunc [c,d]$.
For $J = [c, \top]$, $\BD_i \Ffunc [a,b] \subseteq \Zfunc_i \Ffunc(c)$,
and therefore $\BD_i \Ffunc [a,b] \leq \BD_i \Ffunc [c,\top]$.
\end{proof}

\begin{prop}
Let $(\Ffunc, \Gfunc, \alpha)$ be a morphism in $\Fil(K)$ and
$f_i$ and $g_i$ the $i$-th birth-death functions of $\Ffunc$ and $\Gfunc$, respecively.
Then $(f_i, g_i, \bar \alpha)$ is a morphism in~$\Mon$.
\label{prop:filtomono2}
\end{prop}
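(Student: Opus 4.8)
The plan is to show that the triple $(f_i, g_i, \bar\alpha)$ satisfies the defining axiom of a monotone-preserving morphism, namely that for every interval $I \in \bar Q$, writing $I^\star := \max \bar\alpha^{-1}[\bot, I]$, we have $g_i(I) = f_i(I^\star)$. We already know from Proposition~\ref{prop:filtomono} that $f_i$ and $g_i$ are monotone, and $\bar\alpha$ is a bounded lattice function by the discussion preceding Proposition~\ref{prop:distortion}, so the only content is this one equation. I would split into the two cases built into the definition of $\BD_i$: the case $I = [c, d]$ with $d \neq \top$, and the case $I = [c, \top]$.

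First I would unwind $I^\star$. Write $I = [c,d]$. Since $\bar\alpha[c',d'] = [\alpha(c'), \alpha(d')]$, the preimage $\bar\alpha^{-1}[\bot,[c,d]]$ consists of all intervals $[c',d']$ with $\alpha(c') \leq c$ and $\alpha(d') \leq d$, i.e.\ with $c' \in \alpha^{-1}[\bot,c]$ and $d' \in \alpha^{-1}[\bot,d]$. Its maximum is therefore $[c^\star, d^\star]$ where $c^\star := \max\alpha^{-1}[\bot,c]$ and $d^\star := \max\alpha^{-1}[\bot,d]$ in the sense of Proposition~\ref{prop:maximal} — this is exactly the relationship used implicitly in the left Kan extension remark. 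So the claim becomes $g_i[c,d] = f_i[c^\star, d^\star]$.

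Next I would use that $(\Ffunc,\Gfunc,\alpha)$ is filtration-preserving, which gives $\Gfunc(a) = \Ffunc(a^\star)$ for all $a \in Q$. Applying the chain-complex functor, this forces $\Zfunc_i\Gfunc(a) = \Zfunc_i\Ffunc(a^\star)$ and $\Bfunc_i\Gfunc(a) = \Bfunc_i\Ffunc(a^\star)$ as subspaces (compatibly with the inclusions into the top cycle space, since $\Gfunc(\top) = \Ffunc(\top) = K$ and $\top^\star = \top$). In particular $\Zfunc_i\Gfunc(\top) = \Zfunc_i\Ffunc(\top)$, so the intersections defining $\BD_i$ are taken in the same ambient space on both sides. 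Then for $d \neq \top$,
\begin{align*}
g_i[c,d] &= \dim\big(\Zfunc_i\Gfunc(c) \cap \Bfunc_i\Gfunc(d)\big) = \dim\big(\Zfunc_i\Ffunc(c^\star) \cap \Bfunc_i\Ffunc(d^\star)\big) = f_i[c^\star, d^\star],
\end{align*}
and for $I = [c,\top]$ one has $d^\star = \top^\star = \top$ and $g_i[c,\top] = \dim\Zfunc_i\Gfunc(c) = \dim\Zfunc_i\Ffunc(c^\star) = f_i[c^\star,\top]$, using the special top-case clause of the definition of $\BD_i$ on both sides. This establishes the axiom, and hence $(f_i,g_i,\bar\alpha)$ is a morphism in $\Mon$.

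The only delicate point — and the step I would be most careful with — is verifying that $\max\bar\alpha^{-1}[\bot,[c,d]] = [c^\star, d^\star]$, i.e.\ that the maximal element of the interval preimage decomposes coordinatewise; this uses that $\bar\alpha$ acts independently on the two endpoints together with Proposition~\ref{prop:maximal} applied to each coordinate. Everything else is a direct substitution using the identity $\Gfunc = \Ffunc(-^\star)$ passed through the additive functors $\Zfunc_i$ and $\Bfunc_i$, with the one subtlety that one must check the ambient space for the intersection is unchanged, which follows from $\top^\star = \top$.
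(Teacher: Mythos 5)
Your proposal is correct and follows essentially the same route as the paper's proof: unwind $I^\star = [c^\star, d^\star]$ coordinatewise, use $\Gfunc(a) = \Ffunc(a^\star)$ to identify the cycle and boundary subspaces, and conclude $g_i(I) = f_i(I^\star)$. You are somewhat more explicit than the paper about the two cases in the definition of $\BD_i$ and about the ambient space of the intersection being unchanged, but these are elaborations of the same argument, not a different one.
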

\begin{proof}
Suppose $\Ffunc : P \to \Delta K$ and $\Gfunc : Q \to \Delta K$.
By definition of morphism in $\Fil(K)$, $\Gfunc(a) = \Ffunc(a^\star)$, for all $a \in Q$,
where $a^\star = \max \bar \alpha^{-1}[\bot, a]$.
For all intervals $I \in \bar Q$, let $I^\star := \max \bar \alpha^{-1} [\bot, I]$.
If $I = [a,b]$, then $I^\star = [a^\star, b^\star]$ where $b^\star = \max \bar \alpha^{-1} [\bot, b]$.
The definition of a filtration-preserving morphism implies the following canonical isomorphisms of chain complexes:
	\begin{align*}
	\Cfunc_\bullet \Gfunc(b) \cong \Cfunc_\bullet \Ffunc(b^\star) && 
	\Cfunc_\bullet \Gfunc(\top) \cong \Cfunc_\bullet \Ffunc(\top) &&
	\Cfunc_\bullet \Gfunc(a) \cong \Cfunc_\bullet \Ffunc(a^\star),
	\end{align*}
which, in turn, implies canonical isomorphisms $\Zfunc_\bullet \Gfunc(a) \cong \Zfunc_\bullet \Ffunc(a^\star)$
and $\Bfunc_\bullet \Gfunc(b) \cong \Bfunc_\bullet \Ffunc(b^\star)$.
We have $\BD_i \Gfunc (I) = \BD_i \Ffunc(I^\star)$
and therefore $g_i(I) = f_i(I^\star)$.
\end{proof}

By Propositions \ref{prop:monotone_composition}, \ref{prop:filtomono} and \ref{prop:filtomono2}, the assignment
to each object in $\Fil(K)$ its birth-death monotone function is functorial.

\begin{defn}
Let $\BD_i : \Fil(K) \to \Mon$ be the functor that assigns to every filtration 
its $i$-th birth-death
monotone function and to every filtration-preserving morphism
the induced monotone-preserving morphism.
We call $\BD_i$ the \define{$i$-th birth-death functor}.
\end{defn}

\begin{ex}
The functor $\BD_1$ applied to the filtration-preserving morphism $(\Ffunc, \Gfunc, \alpha)$
in Example \ref{ex:three} is the monotone-preserving morphism $(f,g, \bar \alpha)$ in Example \ref{ex:four}.
\end{ex}

%
%

\section{Integral Functions}
\label{sec:four}

We now define the category of integral functions over finite metric lattices $\Fnc$ and construct
the M\"obius inversion functor $\M : \Mon \to \Fnc$.

\begin{defn}
\label{defn:charge_preserving}
Let $P$ and $Q$ be finite metric lattices and let $\sigma : \bar P \to \Z$ and $\tau : \bar Q \to \Z$ be two
integral functions on their lattice of intervals.
Note that $\sigma$ and $\tau$ are not required to be monotone.
A \define{charge-preserving morphism} is a triple $(\sigma, \tau, \bar \alpha)$ where
$\sigma : \bar P \to \Z$ and $\tau : \bar Q \to \Z$ are integral functions and 
$\bar \alpha: \bar P \to \bar Q$ is a bounded lattice function induced by a bounded lattice function
$\alpha : P \to Q$ satisfying the following axiom.
For all $I \in \bar Q$ with $I \neq [q,q]$, 
	\begin{equation}
	\label{eq:charge_preserving}
	\tau(I) = \sum_{J \in \bar \alpha^{-1}(I)} \sigma(J).
	\end{equation}
If $\bar \alpha^{-1}(I)$ is empty, then we interpret the sum as $0$.
\end{defn}

\begin{rmk}
\label{rmk:signed_measure}
Our definition of a charge-preserving morphism is related to the definition of a 
morphism between signed measures.
Let $(X, \Sigma_X)$ and $(Y, \Sigma_Y)$ be measurable spaces,
$\phi : (X, \Sigma_X) \to (Y, \Sigma_Y)$ a measurable map, and 
$\mu : \Sigma_X \to \Rspace$ a signed measure.
Then the pushforward of $\mu$ along $f$ is the signed measure
$\phi_{\#} \mu : \Sigma_Y \to \Rspace$ defined as $\phi_{\#} \mu(U) := \mu \big( \phi^{-1}(U) \big)$.
In the category of signed measures, a morphism from $(X, \Sigma_X, \mu)$ to $(Y, \Sigma_Y, \nu)$
is a measurable map $\mu : \Sigma_X \to \Sigma_Y$ such that $\phi_{\#} \mu = \nu$.
\end{rmk}

\begin{ex}
\label{ex:five}
See Figure \ref{fig:fnc_map} for integral functions $\sigma$ and $\tau$ on the lattices of
intervals $\bar P$ and $\bar Q$, respectively, from Example \ref{ex:three}.
The triple $(\sigma, \tau, \bar \alpha)$, where $\bar \alpha : \bar P \to \bar Q$ is from the same example,
is a charge-preserving morphism.
	\begin{figure}
	\centering
	\includegraphics{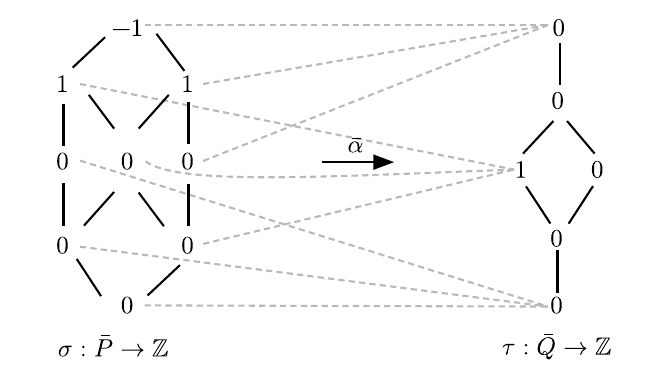}
	\caption{Two integral functions $\sigma$ and $\tau$ on $\bar P$ and $\bar Q$,
	respectively, from Example~\ref{ex:three}. The bounded lattice function
	$\bar \alpha : \bar P \to \bar Q$ from the same figure is a charge-preserving
	morphism from $\sigma$ to $\tau$.}
	\label{fig:fnc_map}
	\end{figure}
\end{ex}

\begin{prop}
\label{prop:charge_composition}
If $(\sigma, \tau, \bar \alpha)$ and $(\tau, \upsilon, \bar \beta)$ are charge-preserving morphisms, 
then $(\sigma, \upsilon, \bar \beta \circ  \bar \alpha)$ is a charge-preserving morphism.
\end{prop}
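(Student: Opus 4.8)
The plan is to mimic exactly the composition arguments already used for filtration-preserving morphisms (Proposition \ref{prop:filtration_composition}) and monotone-preserving morphisms (Proposition \ref{prop:monotone_composition}), but now tracking the defining summation identity \eqref{eq:charge_preserving} instead of a pointwise equality. So suppose $\sigma : \bar P \to \Z$, $\tau : \bar Q \to \Z$, and $\upsilon : \bar R \to \Z$, and that $\bar\alpha : \bar P \to \bar Q$ and $\bar\beta : \bar Q \to \bar R$ are the induced bounded lattice maps. First I would record the elementary set-theoretic fact that for any $I \in \bar R$,
\[
(\bar\beta \circ \bar\alpha)^{-1}(I) \;=\; \bigcupdot_{J \in \bar\beta^{-1}(I)} \bar\alpha^{-1}(J),
\]
a disjoint union because the sets $\bar\alpha^{-1}(J)$ for distinct $J$ are disjoint. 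This is the only structural input needed.

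Next, fix $I \in \bar R$ with $I \neq [r,r]$ and compute, using the defining axiom for $(\tau,\upsilon,\bar\beta)$ first and then for $(\sigma,\tau,\bar\alpha)$:
\[
\upsilon(I) \;=\; \sum_{J \in \bar\beta^{-1}(I)} \tau(J) \;=\; \sum_{J \in \bar\beta^{-1}(I)} \;\sum_{L \in \bar\alpha^{-1}(J)} \sigma(L) \;=\; \sum_{L \in (\bar\beta \circ \bar\alpha)^{-1}(I)} \sigma(L),
\]
where the last equality is exactly the disjoint-union decomposition above, and the case where some fibre is empty is handled by the convention that an empty sum is $0$, consistent with the convention in Definition \ref{defn:charge_preserving}. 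This shows $(\sigma,\upsilon,\bar\beta\circ\bar\alpha)$ satisfies \eqref{eq:charge_preserving}.

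The one point that needs a moment of care — and the closest thing to an obstacle — is the application of the second axiom. The axiom for $(\sigma,\tau,\bar\alpha)$ only gives $\tau(J) = \sum_{L \in \bar\alpha^{-1}(J)} \sigma(L)$ for $J \neq [q,q]$, so I must check that in the middle sum above no "diagonal" interval $[q,q] \in \bar Q$ is hit by $\bar\beta^{-1}(I)$ for our chosen $I \neq [r,r]$. Since $\bar\beta$ is induced by a lattice map $\beta : Q \to R$, it sends diagonal intervals to diagonal intervals, $\bar\beta([q,q]) = [\beta(q),\beta(q)]$; hence if $[q,q] \in \bar\beta^{-1}(I)$ then $I = [\beta(q),\beta(q)]$ is diagonal, contradicting $I \neq [r,r]$. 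So every $J$ appearing in the outer sum is non-diagonal and the second axiom applies legitimately. Finally I would note that $\bar\beta \circ \bar\alpha$ is itself a bounded lattice map induced by $\beta \circ \alpha$ (this is part of the $\Int$ functoriality recorded in \S\ref{sec:one.lattices}), completing the verification that $(\sigma,\upsilon,\bar\beta\circ\bar\alpha)$ is a charge-preserving morphism.
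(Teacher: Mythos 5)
Your proof is correct and follows essentially the same route as the paper's: the same two-step application of Equation (\ref{eq:charge_preserving}) followed by collapsing the double sum over the fibres of $\bar\beta\circ\bar\alpha$, including the same key observation that $\bar\beta$ sends diagonal intervals to diagonal intervals, so no $J=[q,q]$ can appear in $\bar\beta^{-1}(I)$ when $I\neq[r,r]$. Your write-up is slightly more explicit than the paper's (spelling out the disjoint-union decomposition and the functoriality of $\Int$), but there is no substantive difference.
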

\begin{proof}
Suppose $\sigma : \bar P \to \Z$, $\tau : \bar Q \to \Z$, 
and $\upsilon : \bar R \to \Z$.
For all $I \in \bar R$ that is not of the form $[r,r]$,
	$$\upsilon(I) = \sum_{J \in  \bar \beta^{-1} (I)} \tau(J) = \sum_{J \in  \bar \beta^{-1}(I)} \sum_{K \in  \bar \alpha^{-1}(J)}
	\sigma(K) = \sum_{K \in ( \bar \beta \circ  \bar \alpha)^{-1}(I)} \sigma(K).$$
Note that since $\bar \beta$ is induced by a bounded lattice function $\beta : Q \to R$,
$J \in \bar \beta^{-1}(I)$ cannot be of the form $[q,q]$.
\end{proof}

\begin{defn}
Let $\Fnc$ be the category whose objects are integral functions $\sigma : \bar P \to \Z$, over all
finite metric lattices $P$, and whose morphisms are charge-preserving morphisms.
We call $\Fnc$ the \define{category of integral functions}.
\end{defn}

\subsection{M\"obius Inversion Functor}

Given any monotone integral function $f : \bar P \to \Z$ of $\Mon$,
there is a unique integral function $\sigma : \bar P \to \Z$ such that
	\begin{equation}
	f(J) = \sum_{I \in \bar P: I \preceq J} \sigma (I)
	\label{eq:mobius}
	\end{equation}
for all $J \in \bar P$ \cite{Rota_Russian, rota64}.
The function $\sigma$ is the called the \emph{M\"obius inversion} of $f$.

\begin{prop}
\label{prop:mobius_inver}
Let $(f, g, \bar \alpha)$ be a morphism in $\Mon$, and let $\sigma$ and $\tau$
be the M\"obius inversions of $f$ and $g$, respectively. 
Then $(\sigma, \tau, \bar \alpha)$
is a morphism in $\Fnc$.
\end{prop}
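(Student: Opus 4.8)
The plan is to verify the defining axiom of a charge-preserving morphism, namely Equation~(\ref{eq:charge_preserving}), directly from the Möbius inversion formula~(\ref{eq:mobius}) and the defining axiom of a monotone-preserving morphism, namely $g(I) = f(I^\star)$ where $I^\star := \max \bar\alpha^{-1}[\bot, I]$. Fix $I \in \bar Q$ with $I \neq [q,q]$. Using the definition of $\tau$ as the Möbius inversion of $g$, together with the usual Möbius-inversion identity expressing $\tau(I)$ as an alternating sum of values of $g$ over the interval $[\bot, I]$ in $\bar Q$, and then substituting $g(I') = f(I'^\star)$, I would rewrite $\tau(I)$ entirely in terms of values of $f$. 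On the other side, I would expand $\sum_{J \in \bar\alpha^{-1}(I)} \sigma(J)$ using~(\ref{eq:mobius}) for $f$, so each $\sigma(J)$ becomes an alternating sum of $f$-values over $[\bot, J]$ in $\bar P$. The goal is to show these two expressions agree.

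The cleaner route, which I expect to be the one the authors take, is to avoid Möbius functions altogether and instead use the characterization~(\ref{eq:mobius}) as the definition, reducing the claim to an identity about the monotone functions themselves. Concretely, since $\tau$ is \emph{defined} to be the unique function satisfying $g(J) = \sum_{I \preceq J} \tau(I)$, it suffices to show that the function $I \mapsto \sum_{J \in \bar\alpha^{-1}(I)} \sigma(J)$ (extended appropriately on the diagonal intervals $[q,q]$) also satisfies this summation formula against $g$; uniqueness of Möbius inversion then forces the two to coincide. So the key computation is: for each $I_0 \in \bar Q$,
\begin{align*}
\sum_{I \preceq I_0} \; \sum_{J \in \bar\alpha^{-1}(I)} \sigma(J)
&= \sum_{J \,:\, \bar\alpha(J) \preceq I_0} \sigma(J)
= \sum_{J \preceq I_0^\star} \sigma(J)
= f(I_0^\star)
= g(I_0),
\end{align*}
where the first equality reindexes the double sum, the second uses that $\bar\alpha(J) \preceq I_0$ if and only if $J \preceq I_0^\star$ (this is exactly the adjunction property of $I_0^\star = \max\bar\alpha^{-1}[\bot, I_0]$, using monotonicity of $\bar\alpha$), the third is~(\ref{eq:mobius}) for $f$, and the last is the monotone-preserving axiom.

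The main obstacle is the bookkeeping around the diagonal intervals $[q,q]$, which are excluded from the charge-preserving axiom~(\ref{eq:charge_preserving}) and on which $\bar\alpha^{-1}$ behaves specially (a preimage $\bar\alpha^{-1}(I)$ never contains an interval of the form $[p,p]$ when $I \neq [q,q]$, as noted in the proof of Proposition~\ref{prop:charge_composition}, but $\bar\alpha^{-1}([q,q])$ may be nonempty). I would need to check that the reindexing of the double sum, and the equivalence $\bar\alpha(J) \preceq I_0 \iff J \preceq I_0^\star$, still go through cleanly when $I_0$ ranges over all of $\bar Q$ including diagonal intervals, and that the resulting constraints on $\tau$ at diagonal intervals are consistent with (but not over-determined by) the requirement. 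A careful statement of which summation identities hold on $[q,q]$ versus off the diagonal, mirroring the corresponding care taken in the definition of $\BD_i$ at intervals $[a,\top]$, should resolve this; once that is pinned down, the rest is the short display above.
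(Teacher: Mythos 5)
Your argument is correct and hinges on exactly the same key identity as the paper's proof, but the packaging is genuinely different. The paper proves the stronger statement that $\tau(J) = \sum_{K \in \bar\alpha^{-1}(J)}\sigma(K)$ for \emph{all} $J \in \bar Q$ by induction on $\bar Q$: the inductive step writes $\tau(J) = \sum_{I\preceq J}\tau(I) - \sum_{I\prec J}\tau(I) = g(J) - \sum_{I\prec J}\tau(I)$, applies the inductive hypothesis to the second sum, and then uses your reindexing together with the equivalence $\bar\alpha(K)\preceq J \iff K\preceq J^\star$ to collapse the difference to $\sum_{K:\bar\alpha(K)=J}\sigma(K)$. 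You instead verify that the pushforward $h(I) := \sum_{J\in\bar\alpha^{-1}(I)}\sigma(J)$ satisfies the defining summation identity $g(I_0)=\sum_{I\preceq I_0}h(I)$ and appeal to uniqueness of the M\"obius inversion; this is the same computation with the induction absorbed into the uniqueness statement, and it is arguably the cleaner of the two. Your residual worry about diagonal intervals dissolves: the reindexing of the double sum is just a partition of $\{J : \bar\alpha(J)\preceq I_0\}$ by the value of $\bar\alpha(J)$, and the adjunction $\bar\alpha(J)\preceq I_0 \iff J\preceq I_0^\star$ holds for every $I_0\in\bar Q$ (forward because $I_0^\star$ is the maximum of $\bar\alpha^{-1}[\bot,I_0]$, which exists by Proposition \ref{prop:maximal}; backward by monotonicity of $\bar\alpha$), so you obtain the identity at all intervals, diagonal or not. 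Since the charge-preserving axiom only demands it off the diagonal, nothing is over-determined --- this is precisely the posture the paper takes when it announces that it proves the identity for \emph{all} $J\in\bar Q$.
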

\begin{proof}
Suppose $f : \bar P \to \Z$ and $g : \bar Q \to \Z$.
We show that 
$$\tau(J) = \sum_{K \in \bar \alpha^{-1}(J)} \sigma(K)$$
for \emph{all} $J \in \bar Q$, and thus $(\sigma, \tau, \bar \alpha)$ is a charge-preserving morphism.
The proof is by induction on the finite metric lattice $\bar Q$.
By Proposition \ref{prop:maximal}, the pre-image $ \bar \alpha^{-1} [\bot, J]$ has a unique
maximal element $J^\star$, and $f(J^\star) = g(J)$ by definition of a morphism in $\Mon$.

For the base case, suppose $J = \bot$.
Then by Equation (\ref{eq:mobius}), $g(J) = \tau(J)$.
By definition of a morphism in $\Mon$, $g(J) = f(J^\star)$.
By Equation (\ref{eq:mobius}), 
$$f(J^\star) = \sum_{K \leq J^*} \sigma(K) = \sum_{K\in \bar \alpha^{-1}(J)} \sigma(K)$$
thus proving the base case.

For the inductive step, suppose $\tau(I) = \sum_{K \in \bar \alpha^{-1}(I)} \sigma(K)$,
for all $I \prec J$.
Then
	\begin{align*}
	\tau(J) &= \sum_{I\in \bar Q: I \preceq J} \tau (I) - \sum_{I \in \bar Q: I \prec J} \tau(I)  \\
	&= g(J) - \sum_{I \in \bar Q: I \prec J} \tau(I) & \text{by Equation (\ref{eq:mobius})}\\
	&= g(J) - \sum_{I \in \bar Q: I \prec J} \sum_{K \in \bar \alpha^{-1}(I) } \sigma(K) & \text{by Inductive Hypothesis} \\
	&= f(J^\star) - \sum_{K \in \bar P: \bar \alpha(K) \prec J} \sigma(K) \\
	&= \sum_{K \in \bar P : K \preceq J^\star} \sigma(K) - \sum_{K \in \bar P: \bar \alpha(K) \prec J} \sigma(K) & \text{by Equation (\ref{eq:mobius})}\\
	&= \sum_{K \in \bar P : \bar \alpha(K) \preceq J} \sigma(K) -
	\sum_{K \in \bar P : \bar \alpha(K) \prec J} \sigma(K)  \\
	&= \sum_{K \in \bar P : \bar \alpha(K) = J} \sigma(K)
	= \sum_{K \in \bar \alpha^{-1}(J)} \sigma(K).
 	\end{align*}
\end{proof}

By Propositions \ref{prop:charge_composition} and \ref{prop:mobius_inver},
the assignment to every object in $\Mon$ its M\"obius inversion is functorial. 

\begin{defn}
Let $\M : \Mon \to \Fnc$ be the functor that assigns to every monotone function its M\"obius inversion
and to every monotone-preserving morphism the induced charge-preserving morphism.
We call $\M$ the \define{M\"obius inversion functor}.
\end{defn}

\begin{ex}
The functor $\M$ applied to the monotone-preserving morphism $(f, g, \bar \alpha)$ in Example
\ref{ex:four} is the charge-preserving morphism $(\sigma, \tau, \bar \alpha)$
in Example \ref{ex:five}.
\end{ex}

\section{Edit Distance}
\label{sec:five}
We now define the edit distance in each of the three categories $\Fil(K)$, $\Mon$, and $\Fnc$
and show that the two functors $\BD_\ast$ and $\M$ are $1$-Lipschitz.
Denote by $\star$ the metric lattice consisting of just one element.

\subsection{Distance Between Filtrations}

A \emph{path} between two filtrations $\Ffunc$ and $\Hfunc$ in $\Fil(K)$ is a finite sequence
	\begin{equation*}
	\begin{tikzcd}
	\Ffunc \ar[r, leftrightarrow, "\alpha_1"] & \Gfunc_1 \ar[r, leftrightarrow, "\alpha_2"] & \cdots 
	\ar[r, leftrightarrow, "\alpha_{n-1}"] & \Gfunc_{n-1} \ar[r, leftrightarrow, "\alpha_n"] & \Hfunc
	\end{tikzcd}
	\end{equation*}
where $\leftrightarrow$ denotes a filtration-preserving morphism in either direction.
The \emph{length} of a path is the sum $\sum_{i=1}^n || \alpha_i ||$
of the distortions of all the bounded lattice functions.
Again, $|| \alpha_i ||$ might be infinite and so the length of a path might be infinite.
Note that the filtration $\Omega : \star \to \Delta K$ is terminal in $\Fil(K)$.
This implies that any two filtrations in $\Fil(K)$ are connected by a path.

\begin{defn}
\label{defn:edit_fil}
The \define{edit distance} $\Dist_{\Fil(K)}(\Ffunc,\Hfunc)$ between any two filtrations
in $\Fil(K)$ 
is the length of the shortest path between $\Ffunc$ and $\Hfunc$.
\end{defn}

\subsection{Distance Between Monotone Integral Functions}

A \emph{path} between two monotone functions $f$ and $h$ in $\Mon$ is a finite sequence
	\begin{equation*}
	\begin{tikzcd}
	f \ar[r, leftrightarrow, " \bar \alpha_1"] & g_1 \ar[r, leftrightarrow, " \bar \alpha_2"] & \cdots 
	\ar[r, leftrightarrow, " \bar \alpha_{n-1}"] & g_{n-1} \ar[r, leftrightarrow, " \bar \alpha_n"] & h
	\end{tikzcd}
	\end{equation*}
where $\leftrightarrow$ denotes a monotone-preserving morphism in either direction.
The \emph{length} of a path is the sum $\sum_{i=1}^n ||  \bar \alpha_i ||$
of the distortions of all the bounded lattice functions.
Suppose $f [\top, \top] = n$, and let $e : \bar \star \to \Z$ be the monotone integral function
where $e [\star, \star] = n$.
Then there is a unique monotone-preserving morphism from $f$ to $e$.
Thus there is a path
between any two monotone-integral functions $f$ and $h$ such that $f [\top, \top] = h[\top, \top]$.

\begin{defn}
\label{defn:edit_mon}
The \define{edit distance} $\Dist_\Mon(f,h)$ between any two monotone functions in $\Mon$
is the length of the shortest path between $f$ and $h$.
If there are no paths, then we let $\Dist_\Mon (f,h) = \infty$.
\end{defn}

\begin{lem}
\label{lem:first}
Let $\Ffunc$ and $\Gfunc$ be two objects of $\Fil(K)$.
Then for every dimension $i$, 
$$\Dist_\Mon ( \BD_i \Ffunc, \BD_i \Gfunc) \leq \Dist_{\Fil(K)} (\Ffunc, \Gfunc).$$
\end{lem}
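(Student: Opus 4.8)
The statement to prove is $\Dist_\Mon(\BD_i\Ffunc, \BD_i\Gfunc) \le \Dist_{\Fil(K)}(\Ffunc,\Gfunc)$. The natural strategy is functoriality plus the observation that $\BD_i$ does not increase distortion — indeed, it does not change the distortion at all, since the bounded lattice function $\alpha$ underlying a filtration-preserving morphism is literally the same data underlying the monotone-preserving morphism $\BD_i$ produces. So the plan is: take an arbitrary path realizing the edit distance in $\Fil(K)$, push it through $\BD_i$ to get a path in $\Mon$ of the same length, and conclude.

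In more detail, first I would fix a path $\Ffunc \stackrel{\alpha_1}{\leftrightarrow} \Gfunc_1 \stackrel{\alpha_2}{\leftrightarrow} \cdots \stackrel{\alpha_n}{\leftrightarrow} \Gfunc$ in $\Fil(K)$ of length $\sum_{j=1}^n \|\alpha_j\|$; if $\Dist_{\Fil(K)}(\Ffunc,\Gfunc)=\infty$ there is nothing to prove, so assume it is finite and, passing to an infimizing sequence of paths, that the length of the chosen path is within $\ee$ of $\Dist_{\Fil(K)}(\Ffunc,\Gfunc)$ for arbitrary $\ee>0$. For each arrow $\alpha_j : P_{j-1} \to P_j$ (or its reverse) in the path, Proposition \ref{prop:filtomono2} says that $(\BD_i$ of the source, $\BD_i$ of the target, $\bar\alpha_j)$ is a morphism in $\Mon$, in the same direction. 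Applying $\BD_i$ termwise therefore yields a path
	\begin{equation*}
	\begin{tikzcd}
	\BD_i\Ffunc \ar[r, leftrightarrow, "\bar\alpha_1"] & \BD_i\Gfunc_1 \ar[r, leftrightarrow, "\bar\alpha_2"] & \cdots \ar[r, leftrightarrow, "\bar\alpha_n"] & \BD_i\Gfunc
	\end{tikzcd}
	\end{equation*}
in $\Mon$ (that such a path exists between these two objects is also guaranteed abstractly, since $\BD_i\Ffunc[\top,\top] = \BD_i\Gfunc[\top,\top]$, but here we are producing an explicit one). Its length is $\sum_{j=1}^n \|\bar\alpha_j\|$.

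The one genuine point to invoke — and the only place a lemma is really doing work — is Proposition \ref{prop:distortion}, which gives $\|\bar\alpha_j\| = \|\alpha_j\|$ for every $j$; this is where the metric on the interval lattice being the sup-metric matters. Hence the length of the pushed-forward path equals $\sum_{j=1}^n \|\alpha_j\|$, which is within $\ee$ of $\Dist_{\Fil(K)}(\Ffunc,\Gfunc)$. Since $\Dist_\Mon(\BD_i\Ffunc, \BD_i\Gfunc)$ is the infimum of lengths of paths and we have exhibited one of length at most $\Dist_{\Fil(K)}(\Ffunc,\Gfunc)+\ee$, letting $\ee \to 0$ gives the claim. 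I do not anticipate a real obstacle here: the content is entirely the combination of functoriality (Proposition \ref{prop:filtomono2}) with distortion-invariance of $\Int$ (Proposition \ref{prop:distortion}), and the only thing to be slightly careful about is the $\ee$-approximation in case the infimum defining the edit distance is not attained by a single path.
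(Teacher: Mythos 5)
Your proof is correct and follows essentially the same route as the paper's: push a (near-)optimal path through $\BD_i$ using Proposition \ref{prop:filtomono2} and conclude via the distortion equality $\|\bar\alpha\|=\|\alpha\|$ from Proposition \ref{prop:distortion}. Your extra care about the infimum possibly not being attained is a minor refinement the paper glosses over, but it does not change the argument.
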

\begin{proof}
Suppose $\Dist_{\Fil(K)} (\Ffunc, \Gfunc) = \ee$.
Then there is a path in $\Fil(K)$ between $\Ffunc$ and $\Gfunc$ with length~$\ee$.
Apply the functor $\BD_i$ to this path and the result is a path in $\Mon$
between $\BD_i \Ffunc$ and $\BD_i \Gfunc$ and its length, by Proposition \ref{prop:distortion}, is also $\ee$.
Since the distance between the two monotone functions is defined as the length of the
shortest path between them, we have the desired inequality.
\end{proof}

\subsection{Distance Between Integral Functions}

A \emph{path} between two integral functions $\sigma$ and $\tau$ in $\Fnc$ is a finite sequence
	\begin{equation*}
	\begin{tikzcd}
	 \sigma \ar[r, leftrightarrow, " \bar \alpha_1"] &  \theta_1 \ar[r, leftrightarrow, " \bar \alpha_2"] & \cdots 
	\ar[r, leftrightarrow, " \bar \alpha_{n-1}"] &  \theta_{n-1} \ar[r, leftrightarrow, " \bar \alpha_n"] &  \tau
	\end{tikzcd}
	\end{equation*}
where $\leftrightarrow$ denotes a charge-preserving morphism in either direction.
The \emph{length} of path is the sum $\sum_{i=1}^n ||  \bar \alpha_i ||$
of the distortions of all the bounded lattice functions.
Note that any integral function $\omega : \bar \star \to \Z$ is terminal in $\Fnc$; see Definition \ref{defn:charge_preserving}.
This means that any two integral functions in $\Fnc$ are connected by a path, but
this path may have infinite length.

\begin{defn}
\label{defn:edit_fnc}
Define the distance $\Dist_\Fnc( \sigma, \tau)$ between any two integral functions 
in $\Fnc$
as the length of the shortest path between $ \sigma$ and $ \tau$.
\end{defn}

\begin{lem}
\label{lem:second}
Let $f$ and $g$ be two objects of $\Mon$.
Then $\Dist_\Fnc \big( \M (f), \M (g) \big) \leq \Dist_\Mon (f, g)$. 
\end{lem}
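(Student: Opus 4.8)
The plan is to mirror exactly the argument used to prove Lemma \ref{lem:first}, now using the functoriality of $\M$ together with Proposition \ref{prop:distortion}. Concretely, suppose $\Dist_\Mon(f,g) = \ee$. If $\ee = \infty$ there is nothing to prove, so assume $\ee$ is finite; then by Definition \ref{defn:edit_mon} there is a path in $\Mon$
	\begin{equation*}
	\begin{tikzcd}
	f \ar[r, leftrightarrow, " \bar \alpha_1"] & g_1 \ar[r, leftrightarrow, " \bar \alpha_2"] & \cdots
	\ar[r, leftrightarrow, " \bar \alpha_{n-1}"] & g_{n-1} \ar[r, leftrightarrow, " \bar \alpha_n"] & g
	\end{tikzcd}
	\end{equation*}
whose length $\sum_{i=1}^n || \bar \alpha_i ||$ equals $\ee$.

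Next I would apply the M\"obius inversion functor $\M$ to this path term by term. By Proposition \ref{prop:mobius_inver}, each monotone-preserving morphism $\bar \alpha_i$ (in whichever direction it points) is carried to a charge-preserving morphism built from the \emph{same} underlying bounded lattice function $\bar \alpha_i$; in particular $\M$ does not change the bounded lattice functions, only the functions on intervals attached to their source and target. Hence the image is a path in $\Fnc$
	\begin{equation*}
	\begin{tikzcd}
	\M(f) \ar[r, leftrightarrow, " \bar \alpha_1"] & \M(g_1) \ar[r, leftrightarrow, " \bar \alpha_2"] & \cdots
	\ar[r, leftrightarrow, " \bar \alpha_{n-1}"] & \M(g_{n-1}) \ar[r, leftrightarrow, " \bar \alpha_n"] & \M(g)
	\end{tikzcd}
	\end{equation*}
connecting $\M(f)$ to $\M(g)$, and the distortions of the bounded lattice functions along this path are literally unchanged, so its length is again $\sum_{i=1}^n || \bar \alpha_i || = \ee$.

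Finally, since $\Dist_\Fnc(\M(f), \M(g))$ is by Definition \ref{defn:edit_fnc} the infimum of lengths over \emph{all} paths from $\M(f)$ to $\M(g)$, and we have exhibited one such path of length $\ee$, we conclude $\Dist_\Fnc(\M(f), \M(g)) \leq \ee = \Dist_\Mon(f,g)$, as desired.

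I do not anticipate a genuine obstacle here: the proof is the categorical observation that a length-preserving functor is automatically $1$-Lipschitz for edit-style distances. The one point that deserves a sentence of care is that applying $\M$ to a morphism does not alter its underlying bounded lattice function $\alpha$ (equivalently $\bar\alpha$), so that the distortion $||\bar\alpha_i||$ — and hence the length of each edge — is preserved exactly; this is immediate from the construction of $\M$ on morphisms and from Proposition \ref{prop:mobius_inver}. Unlike Lemma \ref{lem:first}, no appeal to Proposition \ref{prop:distortion} is even needed, since the lattice functions themselves (not merely their distortions) are carried along verbatim. The only other bookkeeping item is to dispose of the $\ee = \infty$ case at the outset, which is trivial.
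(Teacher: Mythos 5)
Your proposal is correct and follows essentially the same route as the paper's own proof: take a shortest path in $\Mon$, push it through $\M$ (which leaves each underlying bounded lattice function $\bar\alpha_i$, and hence each distortion, untouched by Proposition \ref{prop:mobius_inver}), and conclude by minimality of the edit distance in $\Fnc$. Your added remarks --- disposing of the infinite case and noting that, unlike Lemma \ref{lem:first}, no appeal to Proposition \ref{prop:distortion} is needed --- are accurate refinements of the same argument.
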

\begin{proof}
Suppose $\Dist_\Mon (f, g) = \ee$.
Then there is a path in $\Mon$ between $f$ and $g$ with length~$\ee$.
Apply the functor $\M$ to this path and the result is a path in $\Fnc$
between $\M(f)$ and $\M(g)$ and its length
is also $\ee$.
Since the distance between the two functions is defined as the length of the
shortest path between them, we have the desired inequality.
\end{proof}

\section{Persistence Diagrams}
\label{sec:persistence_diagrams}
The pieces established in the last four sections fit together into the following pipeline
of $1$-Lipschitz functors:
	\begin{equation*}
	\begin{tikzcd}
	\Fil(K) \ar[rr, "\BD_\ast"] && \Mon \ar[rr, "\M"] && \Fnc.
	\end{tikzcd}
	\end{equation*}The birth-death functor $\BD_\ast$ assigns to an object
$\Ffunc : P \to \Delta K$ of $\Fil(K)$ a monotone integral function
$f_i := \BD_i ( \Ffunc) : \bar P \to \Z$ for every dimension $i$.
The value of $f_i$ on an interval $[a,b] \subseteq P$ is
the dimension of the $\field$-vector space of $i$-cycles that appear by $a$ and become boundaries by~$b$.
The M\"obius inversion functor $\M$ assigns to $f_i$ its M\"obius inversion, which is an 
integral function $\sigma:= \M (f_i) : \bar P \to \Z$.

	\begin{defn}
	\label{defn:diagram}
	Let $P$ be a finite metric lattice and $\Ffunc : P \to \Delta K$ a filtered simplicial complex indexed
	over $P$.
	The \define{$i$-th persistence diagram} of $\Ffunc$ is the integral
	function $\sigma := \M \circ \BD_i (\Ffunc) : \bar P \to \Delta K$.
	\end{defn}
	
\begin{ex}
Consider the filtrations $\Ffunc$ and $\Gfunc$
in Example \ref{ex:two}.
Their $1$-dimensional persistence diagrams are the integral functions 
$\sigma$ and $\tau$, respectively, in Example \ref{ex:five}. 
The integer $\sigma [b,d] = 1$ represents the $1$-cycle that is born at $b$,
and the integer $\sigma[c,d] = 1$ represents the $1$-cycle that is born at $c$.
The integer $\sigma [d,d] = -1$, represents the $1$-cycle that was born twice but contributes
to just one dimension of the total cycle space.
\end{ex}

\begin{ex}
Consider the example of a filtration $\Ffunc : P \to \Delta K$ in Figure \ref{fig:zero_d}
where $P$ is the lattice from Example \ref{ex:one} and $K$ is the $1$-simplex.
Recall $\bar P$ in Example \ref{ex:three}.
Drawn are its zeroth birth-death function $f := \BD_0 \circ \Ffunc : \bar P \to \Z$ and its zeroth persistence diagram 
$\sigma := \M \circ \BD_0 : \bar P \to \Z$.
The integer $\sigma [a,d] = 1$ represents the $0$-cycle that is born at $a$,
and the integer $\sigma [b,d] = 1$ represents the $0$-cycle that is born at $b$.
The integer $\sigma [c,c] = 1$ represents the $0$-cycle that is born at $c$ and dies immediately.
The integer $\sigma [d,d] = -1$ represents the $0$-cycle that was born twice but contributes to just
one dimension of the total cycle space.
	\begin{figure}
	\centering
	\includegraphics{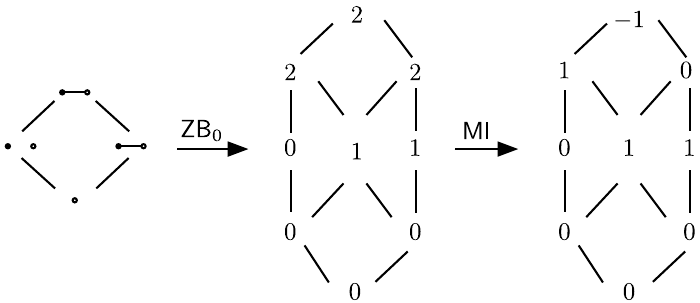}
	\caption{Filtration of the $1$-simplex, its zeroth birth-death function,
	 and its zeroth persistence diagram.
	 Note that since $d$ is the top element of $P$, every interval of the form
	 $[x, d]$ is assigned the dimension of the $0$-cycle space at $x$.}
	\label{fig:zero_d}
	\end{figure}
\end{ex}

Our main theorem follows immediately from Lemmas \ref{lem:first} and \ref{lem:second}.

\begin{thm}[Stability]
\label{thm:main}
Let $\Ffunc : P \to \Delta K$ and $\Gfunc : Q \to \Delta K$ be
two filtrations of a finite simplicial complex $K$ indexed by finite metric lattices, 
and $\sigma : \bar P \to \Z$ and $\tau : \bar Q \to \Z$ their $i$-th persistence diagrams.
Then $\Dist_\Fnc (\sigma, \tau) \leq \Dist_{\Fil(K)}(\Ffunc, \Gfunc)$.
\end{thm}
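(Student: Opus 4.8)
The plan is to compose the two one-step Lipschitz estimates already in hand. By Definition \ref{defn:diagram} the $i$-th persistence diagram of $\Ffunc$ is the integral function $\sigma = \M \circ \BD_i(\Ffunc)$, and likewise $\tau = \M \circ \BD_i(\Gfunc)$. So the entire content of the theorem is a concatenation of Lemma \ref{lem:first} with Lemma \ref{lem:second}; there is no new argument to make.

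Concretely, first I would invoke Lemma \ref{lem:first} on the filtrations $\Ffunc$ and $\Gfunc$ to obtain
$$\Dist_\Mon(\BD_i \Ffunc,\, \BD_i \Gfunc) \leq \Dist_{\Fil(K)}(\Ffunc, \Gfunc).$$
Then I would set $f := \BD_i \Ffunc$ and $g := \BD_i \Gfunc$, which are objects of $\Mon$ by Proposition \ref{prop:filtomono}, and apply Lemma \ref{lem:second} to them:
$$\Dist_\Fnc\big(\M(f),\, \M(g)\big) \leq \Dist_\Mon(f, g).$$
Chaining the two displays and recalling $\sigma = \M(f)$ and $\tau = \M(g)$ gives $\Dist_\Fnc(\sigma, \tau) \leq \Dist_{\Fil(K)}(\Ffunc, \Gfunc)$, which is the assertion.

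I do not expect any obstacle: the substantive work lives inside the two lemmas, whose proofs in turn rest on the fact that $\BD_i$ and $\M$ are functors carrying a path in the source category to a path of \emph{equal length} in the target category — which is precisely what Propositions \ref{prop:distortion}, \ref{prop:filtomono2}, and \ref{prop:mobius_inver} supply, the key point being that Proposition \ref{prop:distortion} guarantees the distortion of a bounded lattice function is unchanged on passing to the lattice of intervals. The only thing worth a sentence of care in writing up is the bookkeeping that both lemmas apply to the same intermediate pair $(\BD_i \Ffunc, \BD_i \Gfunc)$, so that the inequalities genuinely telescope.
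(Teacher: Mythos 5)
Your proposal is correct and matches the paper's argument exactly: the paper states that Theorem \ref{thm:main} follows immediately from Lemmas \ref{lem:first} and \ref{lem:second}, which is precisely the chaining of inequalities you carry out. No further comment is needed.
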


\section{Classical Persistent Homology}
\label{sec:ordinary}
We now relate our definitions to that of classical persistent homology.
First, we show that our definition of the persistence diagram is the same as the original
definitions of \cite{CSEdH} and \cite{size_theory,10.1117/12.279674}.
Second, we show that the bottleneck distance between classical persistence
diagrams is strongly equivalent to the edit distance.

\subsection{Classical Persistence Diagrams}
\label{subsec:classical_diagrams}

Fix a finite $1$-parameter filtration $K_{r_1} \subseteq K_{r_1} \subseteq \cdots \subseteq K_{r_n} = K$
of a finite simplicial complex~$K$ indexed by real the numbers $r_1 < \cdots < r_n$.
Let $P := 1 <  \cdots < n < \infty$ be the totally ordered lattice with $n+1$ elements
with $d_P(a,b) = | r_a - r_b |$, for $b \neq \infty$,
and $d_P(a,\infty) = \infty$.
Let $\Ffunc : P \to \Delta K$ be the filtration that assigns to every $a \in P \setminus \{ \infty \}$ 
the subcomplex $K_{r_a}$ and to $\infty$ the total complex $K$.
Cohen-Steiner, Edelsbrunner, and Harer define the $i$-th persistence diagram of this filtration as the integral
function $\sigma_i :  \bar P \to \Z$ defined as follows.
For $a < b \neq \infty$,  $\sigma_i [a,b]$ is the following signed sum
of ranks:
	\begin{align*}
	\sigma_i [a,b] :=& \; \rank \Hfunc_i F ( a \leq b-1 )  - \rank \Hfunc_i F ( a-1 \leq b-1 ) 
	- \rank \Hfunc_i F ( a \leq b  ) + \rank \Hfunc_i F ( a-1 \leq b) \\
	=& \dim \dfrac{\Zfunc_i F (a)}{\Zfunc_i F(a) \cap \Bfunc_i F(b-1)} - 
	\dim \dfrac{\Zfunc_i F (a-1)}{\Zfunc_i F(a-1) \cap \Bfunc_i F(b-1)} \\
	& -\dim \dfrac{\Zfunc_i F (a)}{\Zfunc_i F(a) \cap \Bfunc_i F(b)} +
	\dim \dfrac{\Zfunc_i F (a-1)}{\Zfunc_i F(a-1) \cap \Bfunc_i F(b)} \\
	=& \dim \Zfunc_i F (a) - \dim \big( \Zfunc_i F(a) \cap \Bfunc_i F(b-1) \big) - 
	\dim \Zfunc_i F (a-1) + \dim \big( \Zfunc_i F(a-1) \cap \Bfunc_i F(b-1) \big) \\
	& -\dim \Zfunc_i F (a) + \dim \big( \Zfunc_i F(a) \cap \Bfunc_i F(b) \big) +
	\dim \Zfunc_i F (a-1) \\
	& - \dim \big( \Zfunc_i F(a-1) \cap \Bfunc_i F(b) \big) \\
	=& - \dim \big( \Zfunc_i F(a) \cap \Bfunc_i F(b-1) \big) + 
	\dim \big( \Zfunc_i F(a-1) \cap \Bfunc_i F(b-1) \big) \\
	& + \dim \big( \Zfunc_i F(a) \cap \Bfunc_i F(b) \big)
	- \dim \big( \Zfunc_i F(a-1) \cap \Bfunc_i F(b) \big) \\
	=& - \BD_i \Ffunc [a,b-1] + \BD_i \Ffunc [a-1,b-1] + \BD_i \Ffunc [a,b] - 
	\BD_i \Ffunc [a-1,b].
	\end{align*}
For $a < b = \infty$, $\sigma[a,\infty]$ is the following signed sum of ranks:
\begin{align*}
	\sigma_i [a, \infty] :=& \; \rank \Hfunc_i F ( a \leq \infty )  - \rank \Hfunc_i F ( a-1 \leq \infty ) \\
	=& \dim \dfrac{\Zfunc_i F (a)}{\Zfunc_i F(a) \cap \Bfunc_i F(n)} - 
	\dim \dfrac{\Zfunc_i F (a-1)}{\Zfunc_i F(a-1) \cap \Bfunc_i F(n)} \\
	=& \dim \Zfunc_i F (a) - \dim \big( \Zfunc_i F(a) \cap \Bfunc_i F(n) \big) - 
	\dim \Zfunc_i F (a-1) + \dim \big( \Zfunc_i F(a-1) \cap \Bfunc_i F(n) \big) \\
	=& \BD_i \Ffunc [a,\infty] - \BD_i \Ffunc [a,n] - \BD_i \Ffunc [a-1,\infty] + 
	\BD_i \Ffunc [a-1,n].
	\end{align*}
However, in this paper we define the persistence diagram of $F$ as 
$\tau := \M \circ \BD_\ast (F) : \bar P \to \Z$;
see Definition \ref{defn:diagram}.
It turns out that the two are the same.
Since $P$ is totally ordered, the M\"obius inversion of $\BD_\ast \Ffunc$ has the following simple
formula for any $a \leq b$:
	$$\tau[a,b] := \BD_\ast F [a,b] - \BD_\ast F [a-1,b] - \BD_\ast F [a,b-1] + \BD_\ast F [a-1,b-1].$$
We see that for $a < b$, $\sigma [a,b] = \tau[a,b]$.

\subsection{Bottleneck Distance}
\label{subsec:bottleneck}

We prove that the bottleneck distance defined between classical persistence diagrams is strongly equivalent
to the edit distance. 
Let $P$ and $Q$ be finite, totally ordered metric lattices. 
In order to define the bottleneck distance between two integral functions 
$\sigma: \bar P \to \Z$ and $\tau: \bar Q \to \Z$, we need isometric, monotone embeddings of $P$ and $Q$ into 
the totally ordered lattice $\Rspace$. 
This is problematic since the edit distance does not depend on the embedding while the bottleneck distance does. 
We fix this issue by requiring that $\bot_P$ and $\bot_Q$ map to $0\in \Rspace$ under the embeddings. 
We identify elements of $P$ and $Q$ with their images in $\Rspace$ under the assumed embeddings. 
This section culminates in a proof of the following theorem.

\begin{thm}\label{thm:bottleneckequivalence}
Let $P$ and $Q$ be finite, totally ordered metric lattices with an isometric, 
monotone embedding into $\Rspace$ such that $\bot_P = \bot_Q = 0$.
Let $\sigma : \bar P \to \Z$ and $\tau : \bar Q \to \Z$ be two non-negative integral functions.
Then $d_B(\sigma, \tau) \leq d_{\Fnc} (\sigma, \tau) \leq 2 d_B(\sigma, \tau)$.
\end{thm}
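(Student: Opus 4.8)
The plan is to establish the two inequalities separately. For the lower bound $d_B(\sigma,\tau) \leq d_{\Fnc}(\sigma,\tau)$, I would argue that a single charge-preserving elementary morphism $\bar\alpha : \bar P \to \bar Q$ of distortion $\ee$ moves the diagram by at most $\ee$ in bottleneck distance. Here the key point is that $\tau(I) = \sum_{J \in \bar\alpha^{-1}(I)} \sigma(J)$: the pushforward collects the charges of $\sigma$ lying over each interval of $\bar Q$. Because $\bar\alpha$ is induced by a monotone lattice map $\alpha : P \to Q$ with $\|\alpha\| = \ee$ and both $\bot$'s fixed at $0$, the image $\bar\alpha(J)$ of each interval $J$ is within $\ee$ (in the $\ell^\infty$ sense on endpoints) of $J$. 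This gives a matching between the multiset of points of $\sigma$ (with multiplicity, ignoring signs by splitting positive and negative parts — but since we assumed $\sigma,\tau$ non-negative along the path this is cleaner) and that of $\tau$ with cost $\leq \ee$. Summing over a path of length $\ee = d_{\Fnc}(\sigma,\tau)$ and using the triangle inequality for $d_B$ yields the claim. The mild subtlety is handling intervals that map to diagonal intervals $[q,q]$, where charge may be annihilated; these correspond precisely to points matched to the diagonal in the bottleneck picture, and the distortion bound controls how far off-diagonal such a point can be.

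For the upper bound $d_{\Fnc}(\sigma,\tau) \leq 2\, d_B(\sigma,\tau)$, I would start from an optimal bottleneck matching between the point multisets of $\sigma$ and $\tau$ (realized inside $\bar\Rspace$ via the fixed embeddings). Let $b := d_B(\sigma,\tau)$. The strategy is to realize this matching by a short path in $\Fnc$ of length at most $2b$, built in two stages: first push $\sigma$ forward to a common "refined" lattice sitting over both $P$ and $Q$ via maps of distortion $b$ each, arranged so that matched points land on top of one another and unmatched points are sent to the diagonal. Concretely, one builds a totally ordered metric lattice $S$ containing (isometric copies of) the relevant off-diagonal support points of both diagrams, with monotone distortion-$b$ maps $P \to S \leftarrow Q$ or a zigzag achieving this; the factor of $2$ comes from needing two edges (one to move $\sigma$, one to move $\tau$) rather than one. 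The birth and death coordinates are handled using the total order: in a totally ordered lattice every interval is determined by an ordered pair of points, so a distortion-$\ee$ monotone map is just a monotone $\ee$-perturbation of the embedded point set, and such maps are easy to write down explicitly.

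I expect the main obstacle to be the upper bound, specifically the bookkeeping that converts an abstract bottleneck matching into an honest zigzag of charge-preserving morphisms over \emph{metric lattices} while keeping total distortion $\leq 2b$. Three points need care: (i) the constraint $\bot_P = \bot_Q = 0$ must be preserved by every intermediate lattice, which constrains how one may translate/merge points; (ii) points of $\sigma$ matched to the diagonal must be routed to diagonal intervals $[s,s]$ of the intermediate lattice, and one must check the charge-preservation equation still holds there (this is why Definition~\ref{defn:charge_preserving} exempts intervals $[q,q]$ — so that charge can legitimately "disappear" into the diagonal); and (iii) multiplicities and signs: if the diagrams are non-negative but an intermediate pushforward creates cancellation, one should check the bottleneck cost bound still applies, or alternatively route positive and negative charges along separate coordinates. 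Once the intermediate lattice is correctly set up, Lemma~\ref{lem:inequality} and Proposition~\ref{prop:distortion} handle the distortion estimates routinely, and the factor-$2$ loss is seen to be unavoidable by a small example (a single point that must be deleted from $P$'s diagram and re-created in $Q$'s).
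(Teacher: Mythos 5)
Your lower bound is, in substance, the paper's own argument: a single charge-preserving morphism $(\sigma,\tau,\bar\alpha)$ of distortion $\ee$ induces the matching $\gamma(I,J)=\sigma(I)$ if $\bar\alpha(I)=J$ and $0$ otherwise, whose norm is $\max_I ||I-\bar\alpha(I)||_\infty \le ||\bar\alpha||$ (this is where $\bot_P=\bot_Q=0$ enters, exactly as you note), and one then sums along a path using the triangle inequality for $d_B$. That half is fine.

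The upper bound as you describe it has a genuine gap: a single cospan $P\xrightarrow{\ \alpha\ } S \xleftarrow{\ \beta\ } Q$ pushing both diagrams onto a common refinement cannot, in general, be built with $||\alpha||+||\beta||\le 2d_B$. Take $\sigma$ supported (with charge $1$) on $[0,10],[5,10],[19,30],[21,30]$ and $\tau$ on $[0,9],[5,11],[20,30],[20,29]$, so $d_B=1$. For the two pushforwards to agree, the images of $[0,10]$ and $[5,10]$ share a right endpoint, which forces $\beta(9)=\beta(11)$ and hence $||\beta||\ge 2$; symmetrically the shared left endpoint $\beta(20)$ forces $\alpha(19)=\alpha(21)$ and $||\alpha||\ge 2$, so any such cospan costs at least $4=4d_B$. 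The paper instead uses \emph{spans}: it linearly interpolates the matched barcodes (Proposition \ref{prop:Interpolation}), subdivides $[0,1]$ at the finitely many critical parameters where interval endpoints collide, and between consecutive critical points builds charge-preserving morphisms $\upsilon_{t}\to\upsilon_{s}$ out of the (there well-defined and monotone) endpoint maps (Lemmas \ref{lem:Critical} and \ref{lem:induced_maps}). The subdivision is not optional bookkeeping: even a single span fails when matched endpoints must cross (e.g.\ left endpoints $4,6$ in $\sigma$ matched to $6,4$ in $\tau$), since monotonicity forbids realizing a crossing in one step. Finally, your accounting of the factor $2$ is off: it does not come from ``one edge for $\sigma$, one for $\tau$'' (your two maps would each need distortion up to $2d_B$ in the example above), but from Lemma \ref{lem:supnorm}, which bounds the distortion of a map by twice its maximal displacement; the displacements along the whole interpolation telescope to $\ee$, giving total length $2\ee$.
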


\begin{defn}
For any two intervals $[a,b], [c,d] \subseteq \Rspace$, let 
$$\big | \big | [a,b] - [c,d] \big | \big |_\infty := \max \big\{ |a-c| , |b-d| \big \}.$$ 
Addition and scalar multiplication of intervals is defined componentwise by 
$[a,b]+[c,d]=[a+c,b+d]$ and $x[a,b]=[xa,xb]$ for any $x\in \Rspace^{\geq 0}$.
\end{defn}
	
\begin{defn}
A \define{matching} between two non-negative integral functions $\sigma: \bar P \to \Z$ and $\tau : \bar Q \to \Z$ is a non-negative
map $\gamma: \bar P \times \bar Q \to \Z$ satisfying 
	\begin{align*}
	\sigma(I) & = \sum_{J \in \bar Q} \gamma(I,J) \textrm{ for all }I\neq [p,p] \in \bar P \\
	\tau(J) & = \sum_{I \in \bar P} \gamma(I,J) \textrm{ for all } J \neq [q,q] \in \bar Q.
	\end{align*}
The \define{norm} of a matching $\gamma$ is
$$ ||\gamma|| := \max_{\big \{I \in \bar P , J \in \bar Q \,\big |\, \gamma(I,J)> 0 \big \}} 
 ||I-J||_\infty. $$
 A matching $\gamma$ is an \define{$\ee$-matching} if $|| \gamma || = \ee$.
The \define{bottleneck distance} between $\sigma$ and $\tau$ is
$$d_B(\sigma, \tau) := \min_\gamma ||\gamma||$$
over all matchings $\gamma$ between $\sigma$ and $\tau$.
\end{defn}
	
\begin{prop}\label{prop:Interpolation}
Let $\sigma : \bar P \to \Z$ and $\tau: \bar Q \to \Z$ be non-negative integral functions and~$\gamma$ a matching between $\sigma$ and $\tau$. Then $\gamma$ induces a 1-parameter family of integral functions $\{ \upsilon_t \}_{t\in [0,1]}$ with $\upsilon_0 = \sigma$ and $\upsilon_1 = \tau$.
\end{prop}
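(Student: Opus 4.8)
The plan is to construct the interpolating family $\{\upsilon_t\}_{t \in [0,1]}$ by deforming each matched pair of intervals $I \in \bar P$, $J \in \bar Q$ linearly, using the componentwise addition and scalar multiplication of intervals introduced just before the statement. Concretely, I would first build a common indexing lattice interpolating between $P$ and $Q$. Since $P$ and $Q$ are embedded isometrically and monotonically in $\Rspace$ with $\bot_P = \bot_Q = 0$, for each $t$ I would let $P_t \subseteq \Rspace$ be a finite totally ordered metric lattice whose elements are images of the convex combinations of matched endpoints: for each matched pair of intervals (i.e.\ each pair with $\gamma(I,J) > 0$), the endpoints of $(1-t)I + tJ$ are points of $\Rspace$, and I collect all such points, together with $0$, into $P_t$. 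Then $\bar{P_t}$ contains, for every matched pair $(I,J)$, the interval $(1-t)I + tJ$.

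Next I would define $\upsilon_t : \bar{P_t} \to \Z$ by declaring, for an interval $L \in \bar{P_t}$ not of the diagonal form $[x,x]$,
\begin{equation*}
\upsilon_t(L) := \sum_{\{(I,J) \,:\, (1-t)I + tJ = L\}} \gamma(I,J),
\end{equation*}
and setting $\upsilon_t$ to be arbitrary (say $0$) on the diagonal intervals, which do not affect membership in $\Fnc$ or the edit distance. By construction $\upsilon_0$ agrees with $\sigma$ up to relabeling of $\bar P$ inside $\bar{P_0}$ — here I use the matching axiom $\sigma(I) = \sum_J \gamma(I,J)$ for $I \neq [p,p]$ — and similarly $\upsilon_1$ agrees with $\tau$ via $\tau(J) = \sum_I \gamma(I,J)$. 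One subtlety: when $(1-t)I + tJ$ coincides with a diagonal interval $[x,x]$ for some matched pair with $t$ strictly between $0$ and $1$, the corresponding mass should be dropped, which is consistent because diagonal intervals carry no constraint in the definition of a charge-preserving morphism or a matching.

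The main obstacle I anticipate is book-keeping rather than conceptual: I need to verify that $\upsilon_t$ is well defined as the underlying set $P_t$ changes with $t$ (the lattice $P_t$ may gain or lose elements, or two endpoints may collide, at finitely many values of $t$), and that each $\upsilon_t$ is genuinely an object of $\Fnc$, i.e.\ an integral function on the lattice of intervals of a finite metric lattice. I would handle this by noting that for each matched pair the endpoints move affinely in $t$, so collisions and coincidences with other endpoints happen only at finitely many $t$; away from those, $P_t$ has locally constant combinatorial type. Since the proposition only asks for the existence of the family $\{\upsilon_t\}$ with the two prescribed endpoints — not yet for continuity or a Lipschitz bound on the induced path, which I expect is the content of the following lemma or proof — it suffices to exhibit the formula above and check the two boundary identities, which follow directly from the two matching axioms.
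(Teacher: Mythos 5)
Your construction is essentially identical to the paper's: the authors define $\bar S_t$ as the set of intervals $(1-t)I+tJ$ with $\gamma(I,J)>0$, set $\upsilon_t(K) := \sum_{(I,J)\,:\,(1-t)I+tJ=K}\gamma(I,J)$, and verify $\upsilon_0=\sigma$ and $\upsilon_1=\tau$ exactly as you do from the two matching axioms. Your extra care about realizing the domain as the full interval lattice $\bar{P_t}$ of a finite metric lattice and about diagonal collisions is a reasonable tightening, but it does not change the argument.
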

\begin{proof}
Let $\bar S_t:= \big \{ (1-t)I +tJ \,\big|\, I\in \bar P,\, J\in \bar Q, \text{ and } \gamma(I,J)>0 \big \}$.
Define $\upsilon_t: \bar S_t \to \Z$ to be
$$
\upsilon_t(K) := \sum_{\substack{I\in \bar P,J\in \bar Q \\ (1-t)I + tJ = K}} \gamma (I,J).
$$
At $t=0$ this reduces to 
$$
\upsilon_0(K) = \sum_{\substack{I \in \bar P , J \in \bar Q \\ I=K}} \gamma(I,J) = \sum_{J \in \bar Q} \gamma(K,J) = \sigma(K),
$$
for all $K \in \bar{S}_0$,
and similarly $\upsilon_1(I) = \tau(I)$.
\end{proof}

As $t$ varies from 0 to 1, there are only finitely many places where the combinatorial structure of $\upsilon_t$ changes. We call these places critical points; see the following definition. These combinatorial changes occur where endpoints of intervals in $\bar S_t$ cross or, equivalently, where the cardinality of the set of endpoints changes.

\begin{defn}\label{def:Critical}
Let $S_t = \{ w \in \Rspace \,|\, \ [w,x] \text{ or } [x,w] \in \bar S_t \}$ be the set of endpoints of intervals in $\bar S_t$.
A point $t\in [0,1]$ is \define{critical} if for all sufficiently small
$\delta >0$, there exists $s \in (t-\delta , t+\delta )$ with $|S_t| \neq |S_s|$.
\end{defn}

\begin{lem}\label{lem:Critical}
If $t \in [0,1]$ is not a critical point, then for any $K \in \bar S_t$ there is a unique pair of intervals $I \in \bar P$ and $J \in \bar Q$ with $\gamma(I,J) > 0$ and $(1-t)I +tJ = K$.
\end{lem}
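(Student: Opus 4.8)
The plan is to prove the contrapositive in spirit: if there were two \emph{distinct} pairs $(I,J)$ and $(I',J')$ in $\bar P \times \bar Q$, both with positive $\gamma$-value, that produced the same convex combination $(1-t)I + tJ = K = (1-t)I' + tJ'$, then $t$ would have to be a critical point. So first I would fix $t \in [0,1]$ and suppose for contradiction that such a coincidence occurs at $K = [k_0, k_1] \in \bar S_t$.

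The key observation is that $P$ and $Q$ are \emph{totally ordered} and have been identified with subsets of $\Rspace$ via the assumed isometric monotone embeddings, and the combination of intervals is taken componentwise. Thus the left endpoint $k_0$ of $K$ is an affine function of the left endpoints of $I$ and $J$ only, and likewise $k_1$ depends only on the right endpoints. Since the sets of endpoints appearing among $\{I : I \in \bar P\}$ and $\{J : J \in \bar Q\}$ are finite, as $s$ ranges near $t$ the function $s \mapsto (1-s)I + sJ$ traces a line segment in $\Rspace^2$ (viewing an interval as a point $(\text{left},\text{right})$), and two such distinct segments—one for $(I,J)$ and one for $(I',J')$—either are disjoint near $s = t$ or cross transversally at $s = t$ (they cannot coincide on an interval of $s$-values unless $I = I'$ and $J = J'$, because at $s=0$ the segment passes through $I$ and at $s=1$ through $J$). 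If $(I,J) \neq (I',J')$ but the two segments pass through the common point $K$ at parameter $t$, then for $s$ slightly less than $t$ and slightly more than $t$ the two corresponding points of $\bar S_s$ are \emph{distinct}, whereas at $s = t$ they collapse to one; hence the cardinality of the endpoint set $|S_s|$ strictly exceeds $|S_t|$ for $s$ on at least one side of $t$ (in fact both sides, by the transversality), so by Definition~\ref{def:Critical} the point $t$ is critical, contradicting the hypothesis.

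Concretely, the steps I would carry out are: (1) translate the hypothesis "$t$ not critical" into the statement that $|S_s|$ is locally constant near $t$; (2) record that each $K \in \bar S_t$ arises as $(1-t)I + tJ$ for \emph{some} pair with $\gamma(I,J)>0$ by definition of $\bar S_t$, so existence is automatic and only uniqueness is at issue; (3) suppose two distinct pairs give the same $K$, separate into the endpoint coordinates, and argue that distinctness of pairs forces the two interpolating segments to be genuinely different; (4) conclude that for small $\delta$ the points of $\bar S_s$ coming from these two pairs are distinct when $s \in (t-\delta,t) \cup (t,t+\delta)$ but equal at $s=t$, so an endpoint-crossing happens exactly at $t$, making $t$ critical—a contradiction.

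The main obstacle, and the place where care is needed rather than cleverness, is step (3): ruling out the degenerate possibility that the two pairs $(I,J) \neq (I',J')$ give segments that \emph{agree for all $s$ in a neighborhood of $t$} rather than crossing transversally. This cannot happen because an affine map $s \mapsto (1-s)I + sJ$ is determined on any nondegenerate interval by its values, and its values at $s=0$ and $s=1$ are $I$ and $J$ respectively; so two such maps agreeing near $t$ must agree everywhere, forcing $I = I'$ and $J = J'$. Once this is nailed down, one must also check the boundary cases $t = 0$ and $t = 1$ (where "sufficiently small $\delta$" only gives a one-sided neighborhood)—but there uniqueness is immediate since $\upsilon_0 = \sigma$ and $\upsilon_1 = \tau$ are honest functions on $\bar P$ and $\bar Q$, as established in the proof of Proposition~\ref{prop:Interpolation}. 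Everything else is bookkeeping about finite sets of real numbers.
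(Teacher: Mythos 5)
Your argument is correct and follows essentially the same route as the paper's proof: both reduce to the observation that two distinct affine interpolations $s \mapsto (1-s)I + sJ$ can agree at only one parameter value (since they are determined by their values at $s=0$ and $s=1$), so a coincidence at a non-critical $t$ forces $I=I'$ and $J=J'$. The paper phrases this directly while you argue by contradiction via an endpoint-crossing at $t$, but the content is the same.
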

\begin{proof}
Suppose $t\in [0,1]$ is not critical and there exists $I , I' \in \bar P$ and $J,J' \in \bar Q$ with $\gamma(I,J)>0$, $\gamma(I',J')>0$ and $(1-t)I +tJ = (1-t) I' +t J'$. Then for any $t'$ sufficiently close to $t$, $(1-t')I +t'J = (1-t') I' +t' J'$. Since the interpolation is linear and two lines that intersect in more than one point must be the same line, it follows that $I=I'$ and $J=J'$.
\end{proof}

\begin{lem}\label{lem:supnorm}
If $\alpha : P \to Q$ is a metric lattice map and $\bar \alpha : \bar P \to \bar Q$ is its induced map on intervals then 
$$\max_{I \in \bar P} || I - \bar\alpha (I) ||_\infty \leq ||\bar \alpha|| \leq 2 \max_{I \in \bar P}|| I - \bar \alpha(I) ||_\infty.$$
\end{lem}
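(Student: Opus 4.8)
The plan is to prove the two inequalities separately, both by unwinding the definitions of $\|\bar\alpha\|$ (a difference of distances in $\bar P$ and $\bar Q$, each a max of two endpoint-distances in $P$ and $Q$) and of $\max_I\|I-\bar\alpha(I)\|_\infty$ (a max over intervals $I=[a,b]$ of $\max\{d_P(a,\alpha(a)),\,d_P(b,\alpha(b))\}$ — wait, no: the embedding puts everything in $\Rspace$, so $\|I-\bar\alpha(I)\|_\infty=\max\{|a-\alpha(a)|,\,|b-\alpha(b)|\}$ computed in $\Rspace$). Since $P,Q\subseteq\Rspace$ isometrically, all the distances appearing are $|x-y|$ for real numbers, so the whole lemma reduces to an elementary statement about real numbers and the map $\alpha$.

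First I would prove the left inequality $\max_{I\in\bar P}\|I-\bar\alpha(I)\|_\infty\le\|\bar\alpha\|$. Fix $I=[a,b]\in\bar P$ achieving the max on the left. Apply the definition of $\|\bar\alpha\|$ to the pair of intervals $[a,b]$ and $[\alpha(a),\alpha(b)]$ — more precisely, to a pair whose images compare nicely. The clean choice is: for the two elements $a,b\in P$, consider the intervals $[a,a]$ and $[b,b]$; then $\|\bar\alpha\|\ge|d_P(a,b)-d_Q(\alpha(a),\alpha(b))|$, but that is not quite what I want. Instead I would use Proposition~\ref{prop:distortion}, which already gives $\|\bar\alpha\|=\|\alpha\|=\max_{c,d\in P}|d_P(c,d)-d_Q(\alpha(c),\alpha(d))|$. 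Taking $c=a$, $d=\alpha(a)$ is illegal since $\alpha(a)\in Q$, not $P$; the right move is to note that because $\bot_P=\bot_Q=0$ in $\Rspace$ and the embeddings are isometric, $|a-\alpha(a)|=|d_P(\bot,a)-d_Q(\bot,\alpha(a))|\le\|\alpha\|=\|\bar\alpha\|$, and likewise $|b-\alpha(b)|\le\|\bar\alpha\|$; taking the max over the two endpoints and then over $I$ gives the left inequality.

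Next, the right inequality $\|\bar\alpha\|\le 2\max_I\|I-\bar\alpha(I)\|_\infty$. Write $m:=\max_I\|I-\bar\alpha(I)\|_\infty$, so that $|a-\alpha(a)|\le m$ for every $a\in P$ (taking $I=[a,a]$). By Proposition~\ref{prop:distortion} it suffices to bound $\|\alpha\|=\max_{a,b\in P}|d_P(a,b)-d_Q(\alpha(a),\alpha(b))|$. Since $P,Q\hookrightarrow\Rspace$ isometrically, $d_P(a,b)=|a-b|$ and $d_Q(\alpha(a),\alpha(b))=|\alpha(a)-\alpha(b)|$, so
\[
\big|\,|a-b|-|\alpha(a)-\alpha(b)|\,\big|\le\big|(a-b)-(\alpha(a)-\alpha(b))\big|\le|a-\alpha(a)|+|b-\alpha(b)|\le 2m
\]
by the triangle inequality (in two steps: reverse triangle inequality, then split the difference). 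Taking the max over $a,b$ gives $\|\bar\alpha\|=\|\alpha\|\le 2m$, as desired.

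The only genuine subtlety — the step I expect to be the main obstacle — is the first inequality, where one must legitimately convert the pointwise discrepancy $|a-\alpha(a)|$ (which is not itself one of the terms $|d_P(c,d)-d_Q(\alpha(c),\alpha(d))|$) into a term that the distortion controls; this is exactly where the normalization hypothesis $\bot_P=\bot_Q=0$ and the isometric embedding into $\Rspace$ are used, rewriting $|a-\alpha(a)|$ as $|d_P(\bot,a)-d_Q(\bot,\alpha(a))|$. Everything else is a routine application of Proposition~\ref{prop:distortion} and the triangle inequality in $\Rspace$; I would double-check whether the statement implicitly wants the embeddings' images of $\bot$ to coincide (it does, by hypothesis) so that the "$d_P(\bot,a)=a$" identification is valid.
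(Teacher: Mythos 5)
Your proof is correct and follows essentially the same route as the paper's: reduce via Proposition \ref{prop:distortion} to $\|\alpha\|$, identify $\max_I\|I-\bar\alpha(I)\|_\infty$ with $\max_a|a-\alpha(a)|$, obtain the lower bound by taking the pair $(\bot,a)$ using $\bot_P=\bot_Q=0$ and $\alpha(\bot)=\bot$, and the upper bound by the triangle inequality. The only cosmetic difference is that you invoke the reverse triangle inequality where the paper uses monotonicity of $\alpha$ to drop the inner absolute values; both are equally routine.
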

\begin{proof}
First note that by Proposition \ref{prop:distortion}, $||\alpha|| = ||\bar\alpha||$ and since $\bar \alpha$ is induced by $\alpha$, $\max_{I \in \bar P} ||I - \bar \alpha (I)||_\infty = \max_{a\in P} |a - \alpha(a)|$ so the inequality reduces to
$$ \max_{a \in P} |a- \alpha(a)| \leq \max_{a,b \in P} \big| |a-b| - |\alpha(a) - \alpha(b)| \big| \leq 2 \max_{a \in P} |a - \alpha(a)|.
$$ 
Note that since $\bot_P = \bot_Q =0$, each element of $P$ and $Q$ are non-negative.
Assume,  without loss of generality, that $a \geq b$. Then the middle quantity above reduces to 
$$\max_{a \geq b \in P} \big|a-b - \big( \alpha(a) -\alpha(b) \big) \big| = \max_{a \geq b \in P} \big| a-\alpha(a) - \big( b-\alpha(b) \big) \big|$$
Letting $b=0$ yields the first inequality and the triangle inequality yields the second.
\end{proof}

\begin{lem}\label{lem:induced_maps}
If $t\in [0,1]$ is not a critical point and $s\in[0,1]$ is any point with no critical points strictly between $t$ and $s$, then there is a charge-preserving morphism $(\upsilon_t , \upsilon_s,\bar\alpha_{t,s})$ with distortion at most $2\ee|s-t|$.
Here $\ee$ is the norm of the matching $\gamma$ between $\sigma$ and $\tau$.
\end{lem}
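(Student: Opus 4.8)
The plan is to build the lattice map $\bar\alpha_{t,s}$ from $\bar S_t$ to $\bar S_s$ directly from the linear interpolation, verify it is a bounded lattice function with the claimed distortion bound, and then check that the charge-preservation axiom (\ref{eq:charge_preserving}) holds because of Lemma~\ref{lem:Critical}. Since $t$ is not critical and no critical point lies strictly between $t$ and $s$, Lemma~\ref{lem:Critical} gives, for each $K \in \bar S_t$, a \emph{unique} pair $I \in \bar P$, $J \in \bar Q$ with $\gamma(I,J) > 0$ and $(1-t)I + tJ = K$; I would then set
$$\bar\alpha_{t,s}(K) := (1-s)I + sJ \in \bar S_s.$$
First I would argue this is well defined and lands in $\bar S_s$, and that it is a bijection whenever $s$ is also non-critical (and in general a surjection, since every element of $\bar S_s$ arises from some $\gamma(I,J)>0$); I should also handle the bottom/top intervals $[\bot,\bot]$, $[\top,\top]$ carefully so that $\bar\alpha_{t,s}$ is genuinely a \emph{bounded} lattice function induced by a lattice map $\alpha_{t,s}$ on endpoints. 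The endpoint map $S_t \to S_s$ sending $(1-t)w + t w'$ (for $w$ an endpoint of some $I$, $w'$ of the matched $J$) to $(1-s)w + sw'$ is monotone because the interpolation preserves order away from critical crossings, and it commutes with $\min$ and $\max$ of endpoints; this is what upgrades it to a lattice map, with $\bar\alpha_{t,s}$ its induced map on intervals in the sense of \S\ref{sec:one.lattices}.

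Next I would bound the distortion. By Lemma~\ref{lem:supnorm}, $\|\bar\alpha_{t,s}\| \le 2\max_{K \in \bar S_t}\|K - \bar\alpha_{t,s}(K)\|_\infty$, so it suffices to bound $\|K - \bar\alpha_{t,s}(K)\|_\infty$ for each $K$. Writing $K = (1-t)I + tJ$ and $\bar\alpha_{t,s}(K) = (1-s)I + sJ$, we get
$$K - \bar\alpha_{t,s}(K) = (s-t)(I - J),$$
so $\|K - \bar\alpha_{t,s}(K)\|_\infty = |s-t|\,\|I-J\|_\infty \le |s-t|\cdot\|\gamma\| = \ee|s-t|$, using that $\gamma(I,J) > 0$ forces $\|I-J\|_\infty \le \|\gamma\| = \ee$. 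Combining with Lemma~\ref{lem:supnorm} gives $\|\bar\alpha_{t,s}\| \le 2\ee|s-t|$, which is the claimed bound. (One subtlety: Lemma~\ref{lem:supnorm} is stated for maps $\bar\alpha$ induced from $\alpha : P \to Q$ with both bottoms at $0$; I'd want the endpoint sets $S_t, S_s$ to be identified with subsets of $\Rspace$ sharing bottom $0$, which follows from the normalization on $P$ and $Q$ and the fact that interpolation fixes $0$ since $\bot_P \mapsto 0 \mapsto \bot_Q$.)

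Finally, the charge-preservation axiom: for $K' \in \bar S_s$ with $K' \neq [w,w]$, I must show $\upsilon_s(K') = \sum_{K \in \bar\alpha_{t,s}^{-1}(K')} \upsilon_t(K)$. By definition $\upsilon_s(K') = \sum_{(1-s)I+sJ = K'} \gamma(I,J)$, and for each such pair, $K := (1-t)I+tJ$ lies in $\bar\alpha_{t,s}^{-1}(K')$; conversely every $K$ in that preimage, via its unique pair (Lemma~\ref{lem:Critical}), contributes exactly $\gamma(I,J) = \upsilon_t(K)$. So both sides sum the same $\gamma$-values, possibly regrouped: the only thing to check is that distinct pairs $(I,J)$, $(I',J')$ mapping to the same $K'$ in $\bar S_s$ correspond to distinct $K$'s in $\bar S_t$ (so no double counting) --- but if $(1-t)I+tJ = (1-t)I'+tJ'$ then by Lemma~\ref{lem:Critical} applied at $t$ we'd need $(I,J) = (I',J')$, contradiction. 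Hence the two sums agree and $(\upsilon_t, \upsilon_s, \bar\alpha_{t,s})$ is a charge-preserving morphism. \textbf{The main obstacle} I anticipate is the bookkeeping around whether $s$ is critical or not and around the exceptional diagonal intervals $[w,w]$: I need $\bar\alpha_{t,s}$ to be a bona fide bounded lattice function (not just a map of sets), which requires showing the endpoint correspondence respects meets and joins even when several $\gamma(I,J)>0$ produce intervals that collide at $s$; establishing this order-theoretic compatibility cleanly, rather than the distortion estimate, is where the real work lies.
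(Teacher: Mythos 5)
Your proposal is correct and follows essentially the same route as the paper: both use Lemma~\ref{lem:Critical} to make the assignment $K = (1-t)I+tJ \mapsto (1-s)I+sJ$ well defined, both verify charge preservation by the same regrouping of $\gamma$-values over fibers, and both obtain the distortion bound via Lemma~\ref{lem:supnorm} together with $\|K-\bar\alpha_{t,s}(K)\|_\infty = |s-t|\,\|I-J\|_\infty \le \ee|s-t|$. The only difference is presentational: the paper defines the map on endpoints first (using non-criticality of $t$ to get uniqueness at the endpoint level and order preservation from the absence of crossings between $t$ and $s$) and then induces the interval map, which is exactly how it discharges the "main obstacle" you flag at the end.
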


\begin{proof}
We start by defining a map $\alpha_{t,s} : S_t \to S_s$. For any $b \in S_t$ note that since $t$ is not critical, there are unique intervals $I \in \bar P$ and $J \in \bar Q$ with $\gamma(I,J) > 0$ and either $(1-t) I +t J = [a,b]$ or $[b,c]$. If $(1-t)I +tJ =[a,b]$ then define $\alpha_{t,s} (b) $ to be the right endpoint of the interval $(1-s)I +s J$. 
Similarly, if $b$ is a left endpoint, then we define $\alpha_{t,s} (b)$ to be the left endpoint of $(1-s)I +s J$. 
This map is order preserving since as $t$ varies, endpoints of intervals only cross at critical points 
and there are no critical points strictly between $t$ and $s$.
 
To prove that $\bar\alpha_{t,s}$ is charge-preserving, observe that

\begin{align*}
\sum_{K \in \bar \alpha_{t,s}^{-1}(L)} \upsilon_t ( K ) &= \sum_{ \substack{ I \in \bar P , J \in \bar Q \\ (1-s) I + s J =L }} \upsilon_t ( (1-t) I +t J ) \\
&= \sum_{ \substack{ I \in \bar P , J \in \bar Q \\ (1-s) I + s J =L }}  \bigg( \sum_{\substack{I'\in \bar P, J'\in \bar Q \\ (1-t)I' + tJ' = (1-t) I +t J}} \gamma (I',J') \bigg) \\
&= \sum_{ \substack{ I \in \bar P , J \in \bar Q \\ (1-s) I + s J =L }} \gamma( I , J ) = \upsilon_s(L)
\end{align*}
where the third equality follows from Lemma \ref{lem:Critical} and the assumption that $t$ is not critical.
The distortion of $\bar\alpha_{t,s}$ is
\begin{align*}
||\bar\alpha_{t,s}|| &\leq 2\max_{K \in \bar S_t}  || K - \bar\alpha_{t,s}(K)||_\infty  \\
&= 2 \max_{\substack{ I \in \bar P, J  \in \bar Q \\ \gamma(I,J) >0 }} ||(1-t)I +tJ - (1-s) I -sJ ||_\infty \\
&= 2|s-t| \max_{\substack{I\in \bar P, J \in \bar Q \\ \gamma(I,J)>0}} ||I - J||_\infty \leq 2\ee|s-t| .
\end{align*}
\end{proof}
	
	\begin{lem}\label{lem:bottleneck1}
	For any non-negative integral functions $\sigma: \bar P \to \Z$ and $\tau: \bar Q \to \Z$ over finite
	sublattices $P , Q\subseteq \Rspace$, $d_\Fnc (\sigma , \tau) \leq 2 d_B(\sigma, \tau)$.
	\end{lem}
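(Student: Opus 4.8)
The plan is to turn an optimal bottleneck matching into a continuous interpolation, cut it at its finitely many critical points, and read off a zigzag in $\Fnc$. First I would fix a matching $\gamma$ between $\sigma$ and $\tau$ realizing the bottleneck distance, $||\gamma|| = \ee := d_B(\sigma,\tau)$; such a matching exists because $\gamma(I,J) \leq \sigma(I)$ whenever $I \neq [p,p]$ and $\gamma(I,J) \leq \tau(J)$ whenever $J \neq [q,q]$, so the only unconstrained value is $\gamma([p,p],[q,q])$, which does not affect $||\gamma||$ and may be taken to be $0$, leaving finitely many matchings to minimize over. Applying Proposition~\ref{prop:Interpolation} to $\gamma$ gives the $1$-parameter family $\{\upsilon_t\}_{t\in[0,1]}$ of integral functions with $\upsilon_0 = \sigma$ and $\upsilon_1 = \tau$.

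Next I would organize the critical points. List the critical points of $\{\upsilon_t\}$ together with $0$ and $1$ as $0 = u_0 < u_1 < \cdots < u_N = 1$; this is a finite set by the discussion following Definition~\ref{def:Critical}. By construction each open interval $(u_{j-1}, u_j)$ contains no critical point, so every point of it is non-critical; choose any $m_j \in (u_{j-1},u_j)$. Since $m_j$ is non-critical and neither $(u_{j-1}, m_j)$ nor $(m_j, u_j)$ contains a critical point, Lemma~\ref{lem:induced_maps} yields charge-preserving morphisms $(\upsilon_{m_j}, \upsilon_{u_{j-1}}, \bar\alpha_{m_j,u_{j-1}})$ and $(\upsilon_{m_j}, \upsilon_{u_j}, \bar\alpha_{m_j,u_j})$ of distortion at most $2\ee(m_j - u_{j-1})$ and $2\ee(u_j - m_j)$ respectively. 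Splicing these together for $j = 1,\dots,N$ produces the zigzag
\[
\sigma = \upsilon_{u_0} \leftrightarrow \upsilon_{m_1} \leftrightarrow \upsilon_{u_1} \leftrightarrow \upsilon_{m_2} \leftrightarrow \cdots \leftrightarrow \upsilon_{m_N} \leftrightarrow \upsilon_{u_N} = \tau
\]
in $\Fnc$, where each $\leftrightarrow$ is one of the above morphisms used in the appropriate direction, always emanating from a non-critical $\upsilon_{m_j}$.

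Finally I would estimate the length. The two morphisms coming from the $j$-th segment contribute at most $2\ee(m_j - u_{j-1}) + 2\ee(u_j - m_j) = 2\ee(u_j - u_{j-1})$, so the path has length at most $2\ee\sum_{j=1}^N (u_j - u_{j-1}) = 2\ee$. Since $d_\Fnc(\sigma,\tau)$ is bounded above by the length of any path from $\sigma$ to $\tau$ (Definition~\ref{defn:edit_fnc}), we obtain $d_\Fnc(\sigma,\tau) \leq 2\ee = 2\,d_B(\sigma,\tau)$. The substantive work is all imported from Lemma~\ref{lem:induced_maps}; the points to watch are that $0$ or $1$ may themselves be critical — handled by routing every morphism out of the non-critical midpoint $m_j$ rather than running directly between consecutive $u_i$ — and that the per-segment bounds telescope to $2\ee$ rather than merely summing to $4\ee$.
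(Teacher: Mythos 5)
Your proposal is correct and follows essentially the same route as the paper's proof: both interpolate along an optimal matching via Proposition~\ref{prop:Interpolation}, pick a non-critical point in each interval between consecutive critical points, apply Lemma~\ref{lem:induced_maps} to get a zigzag of charge-preserving morphisms emanating from those non-critical points, and telescope the per-segment bounds to $2\ee$. The only additions are cosmetic — your explicit remark that the minimum over matchings is attained, and your explicit handling of the case where $0$ or $1$ is critical, both of which the paper leaves implicit.
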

 
 \begin{proof}
We show that $d_\Fnc(\sigma,\tau) \leq 2 d_B(\sigma,\tau)$ by showing that an $\ee$-matching between $\sigma$ and $\tau$ induces a path between $\sigma$ and $\tau$ of length at most $2\ee$. For any $\ee$-matching $\gamma$ between $\sigma$ and $\tau$, let $\{\upsilon_t \}_{t\in [0,1]}$ be the interpolation induced by $\gamma$ from Proposition \ref{prop:Interpolation}. Let $\{s_0=0<s_1\cdots< s_n=1\} \subseteq [0,1]$ be the set of critical points of the interpolation and choose $\{t_0<\cdots <t_{n-1}\}\subseteq [0,1]$ with $0<t_0 <s_1 < t_1 \cdots < t_{n-1}<1$. Then the charge-preserving morphisms $\alpha_{t_i,s_{i}}$ 
and $\alpha_{t_i,s_{i+1}}$ from Lemma \ref{lem:induced_maps} form a path between $\sigma$ and $\tau$ with length at most $\sum_{i=0}^{n-1} 2 \ee \big( |t_i - s_{i}| + |t_i - s_{i+1}| \big) = 2\ee$.
 \end{proof}
	
	\begin{lem}\label{lem:bottleneck2}
	For any non-negative integral functions $\sigma: \bar P \to \Z$ and $\tau: \bar Q \to \Z$ over finite sublattices $P , Q \subseteq \Rspace$, $d_\Fnc (\sigma , \tau) \geq d_B(\sigma, \tau)$.
	\end{lem}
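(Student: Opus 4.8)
The plan is to show that any path in $\Fnc$ between $\sigma$ and $\tau$ of length $\ee$ yields a matching between $\sigma$ and $\tau$ of norm at most $\ee$; taking the infimum over all paths then gives $d_B(\sigma,\tau) \leq d_\Fnc(\sigma,\tau)$. Since a path is a finite concatenation of charge-preserving morphisms (in either direction), it suffices to prove two things: first, that a single charge-preserving morphism $(\sigma,\tau,\bar\alpha)$ with $\|\bar\alpha\| = \ee$ induces a matching of norm at most $\ee$; and second, that matchings can be composed, so that if $\gamma_1$ is an $\ee_1$-matching between $\sigma$ and $\theta$ and $\gamma_2$ is an $\ee_2$-matching between $\theta$ and $\tau$, then there is a matching between $\sigma$ and $\tau$ of norm at most $\ee_1 + \ee_2$. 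A symmetric argument handles morphisms pointing in the reverse direction, since the defining equations of a matching are symmetric in $\sigma$ and $\tau$.

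For the first point, given a charge-preserving morphism $(\sigma,\tau,\bar\alpha)$ with $\bar\alpha$ induced by $\alpha : P \to Q$, I would define $\gamma : \bar P \times \bar Q \to \Z$ by $\gamma(I, J) := \sigma(I)$ if $J = \bar\alpha(I)$ and $\gamma(I,J) := 0$ otherwise. Then for any $J \neq [q,q]$ in $\bar Q$ we have $\sum_{I} \gamma(I,J) = \sum_{I \in \bar\alpha^{-1}(J)} \sigma(I) = \tau(J)$ by Equation (\ref{eq:charge_preserving}), and for $I \neq [p,p]$ in $\bar P$ we have $\sum_J \gamma(I,J) = \sigma(I)$ since $\bar\alpha(I) \neq [q,q]$ (as $\bar\alpha$ is induced by a lattice map). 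The nonzero entries of $\gamma$ occur only at pairs $(I, \bar\alpha(I))$, and by Lemma \ref{lem:supnorm}, $\|I - \bar\alpha(I)\|_\infty \leq \|\bar\alpha\| = \ee$ for every such $I$; hence $\|\gamma\| \leq \ee$. One subtlety is non-negativity: the lemma assumes $\sigma$ and $\tau$ are non-negative, so $\gamma$ is non-negative as required; I should remark that along the interpolating path all intermediate integral functions must also be taken non-negative, which is automatic because each is built as a sum of non-negative $\gamma$-values in Proposition \ref{prop:Interpolation}, or I should restrict attention to paths through non-negative functions.

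For composing matchings, given $\gamma_1$ between $\sigma$ and $\theta$ and $\gamma_2$ between $\theta$ and $\upsilon$ (over $\bar P$, $\bar R$, $\bar Q$ respectively), I would set $\gamma(I,J) := \sum_{L \in \bar R} \frac{\gamma_1(I,L)\,\gamma_2(L,J)}{\theta(L)}$ over those $L$ with $\theta(L) > 0$ — the standard "gluing along the middle" construction for transport plans. The marginal conditions follow from those of $\gamma_1$ and $\gamma_2$, and a nonzero term at $(I,J)$ forces some $L$ with $\gamma_1(I,L) > 0$ and $\gamma_2(L,J) > 0$, whence $\|I - J\|_\infty \leq \|I - L\|_\infty + \|L - J\|_\infty \leq \ee_1 + \ee_2$ by the triangle inequality in $\|\cdot\|_\infty$. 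The main obstacle I anticipate is integrality: the fractional formula above need not produce integer values of $\gamma$. The fix is to work with matchings as multisets of pairs-with-multiplicity rather than as functions, or equivalently to pass to the bipartite-multigraph picture where $\gamma_1$ and $\gamma_2$ are edge sets and composition is path concatenation through the middle layer (each copy of $L$ gets matched on both sides, and we stitch); this manifestly stays integral and the norm bound is unchanged. Assembling these pieces along a shortest path of length $d_\Fnc(\sigma,\tau)$ produces a matching of norm at most $d_\Fnc(\sigma,\tau)$, giving the claimed inequality.
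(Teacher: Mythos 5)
Your construction coincides with the paper's: for a single charge-preserving morphism $(\sigma,\tau,\bar\alpha)$ you define $\gamma(I,J)=\sigma(I)$ when $J=\bar\alpha(I)$ and $0$ otherwise, check the marginals from Equation (\ref{eq:charge_preserving}), and bound $||\gamma||\leq ||\bar\alpha||$ via Lemma \ref{lem:supnorm}, which is exactly the paper's proof. The paper then simply asserts that the single-morphism case suffices, whereas you spell out the two things that assertion hides (the symmetry of the matching conditions under reversing a morphism, and the composition of matchings along a path with additively accumulating norm, including the integrality fix via the multiset picture), so your argument is if anything more complete than the printed one. One cosmetic slip: $\bar\alpha(I)$ can equal a diagonal interval $[q,q]$ even when $I$ is not diagonal (a lattice map may collapse $I$ to a point), but this is harmless, since $\sum_{J}\gamma(I,J)=\gamma(I,\bar\alpha(I))=\sigma(I)$ regardless of where $\bar\alpha(I)$ lands.
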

	
	\begin{proof}
	To show that $d_B (\sigma, \tau) \leq d_\Fnc (\sigma , \tau)$, it is enough to show that a 
	single charge-preserving morphism induces a matching. Let $(\sigma, \tau,\alpha)$ be a charge-preserving morphism with distortion $\ee$. Define a matching $\gamma$ between $\sigma$ and $\tau$ by
$$\gamma(I,J) := 
\begin{cases}
\sigma(I)	& \text{ if } \alpha(I)=J\\
0	& \text{otherwise}
\end{cases}.
$$

Then we have that for any $J \in \bar Q$
$$
\sum_{I\in \bar P} \gamma(I,J) = \sum_{I \in \alpha^{-1}(J)} \sigma(I) = \tau(J)
$$ and for any $I \in \bar P$
$$
\sum_{J\in \bar Q} \gamma(I,J) = \alpha(I).
$$
Therefore $\gamma$ is a matching. The norm of $\gamma$ is
$$
||\gamma||=\max_{I\in \bar P, J \in \bar Q \, : \, \gamma(I,J)>0} ||I-J||_\infty = \max_{I \in \bar P} ||I - \alpha(I)||_\infty \leq ||\alpha||.
$$
	\end{proof}
	
Theorem \ref{thm:bottleneckequivalence} follows immediately from Lemma \ref{lem:bottleneck1} and Lemma \ref{lem:bottleneck2}.
The following two examples show that the bounds in Theorem \ref{thm:bottleneckequivalence} are tight.

	\begin{ex}
	\label{ex:Matching}
	Let $P = \{ 0 < 1 < 2 < 3 \}$ 
	be a totally ordered metric lattice where the distance between two elements is the
	absolute value of their difference.
	Let $\sigma, \upsilon : \bar P \to \Z$ be two integral functions defined as
	\begin{equation*}
	\sigma [a,b] := 
	\begin{cases}
	1 & \text{if $[a,b] = [0,1], [2,3]$} \\
	0 & \text{otherwise}
	\end{cases} 
	\; \; \; \; \; \; 
	\upsilon [a,b] := 
	\begin{cases}
	1 & \text{if $[a,b] = [0,2], [1,3]$} \\
	0 & \text{otherwise}.
	\end{cases}
	\end{equation*} 
	See Figure \ref{fig:matching}.
	The bottleneck distance, $d_B$, between $\sigma$ and $\upsilon$
	is $d_B (\sigma, \upsilon) = 1$.
	We now compute the edit distance, $d_{\Fnc}$, between $\sigma$ and $\upsilon$.
	Consider a third integral function $\tau : \bar Q \to \Z$ where 
	$Q = \{ 0 < 0.5 < 1 < 1.5 < 2 < 2.5 < 3 \}$ is a finite, totally ordered metric lattice
	where the distance between any two elements is the absolute value of the difference and
	\begin{equation*}
	\upsilon [a,b] := 
	\begin{cases}
	1 & \text{if $[a,b] = [0,1.5], [1.5,3]$} \\
	0 & \text{otherwise}.
	\end{cases} 
	\end{equation*} 
	Let $\alpha : P \to Q$ be the bounded lattice function defined as follows
	\begin{align*}
	\alpha(0) := 0 && \alpha(1) := 1.5 && \alpha(2) := 1.5 && \alpha(3) := 3.
	\end{align*} 
	We now have a pair of charge-preserving morphisms $(\sigma, \tau, \bar \alpha)$
	and $(\upsilon, \tau, \bar \alpha)$.
	Thus $d_{\Fnc}(\sigma, \upsilon) \leq 2 || \bar \alpha || = 2 || \alpha || = 2 (0.5) = 1$.
	Further, this is a shortest path between $\sigma$ and $\upsilon$ in $\Fnc$.
	Therefore $d_{\Fnc}( \sigma, \upsilon) = 1$.

	\begin{figure}
	\centering
	\includegraphics{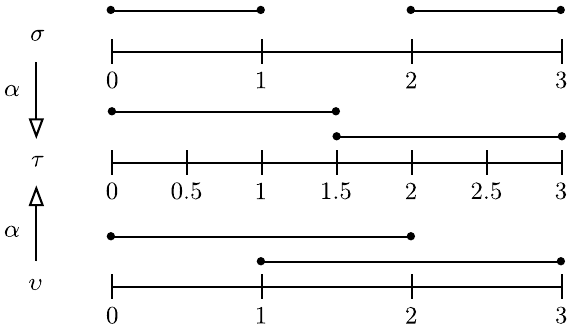}
	\caption{Three integral functions $\sigma, \upsilon: \bar P \to \Z$ and
	$\tau : \bar Q \to \Z$ drawn as barcodes and two charge-preserving morphisms
	$(\sigma, \tau, \bar \alpha)$ and $(\upsilon, \tau, \bar \alpha)$.}
	\label{fig:matching}
	\end{figure}
	\end{ex}

\begin{ex}

Let $P$ be the metric lattice defined in Example \ref{ex:Matching} and $\sigma, \tau : \bar P \to \Z$ be defined as
\begin{equation*}
	\sigma [a,b] := 
	\begin{cases}
	1 & \text{if $[a,b] = [1,2]$} \\
	0 & \text{otherwise}
	\end{cases} 
	\; \; \; \; \; \; 
	\upsilon [a,b] := 
	\begin{cases}
	1 & \text{if $[a,b] = [0,3]$} \\
	0 & \text{otherwise}.
	\end{cases}
	\end{equation*}
See Figure \ref{fig:bot_neq_edit}. The bottleneck distance between $\sigma$ and $\tau$ is 1. We now compute the edit distance $d_\Fnc(\sigma,\tau)$. Let $\alpha:P \to P$ be the bounded lattice function defined by
	\begin{align*}
	\alpha(0) := 0 && \alpha(1) := 0 && \alpha(2) := 3 && \alpha(3) := 3.
	\end{align*} 
The lattice map $\alpha$ induces a charge-preserving morphism $(\sigma,\tau,\bar\alpha)$ with distortion 2. This is the shortest path between $\sigma$ and $\tau$ in $\Fnc$ so $d_\Fnc(\sigma,\tau)=2$.

	\begin{figure}
	\centering
	\includegraphics{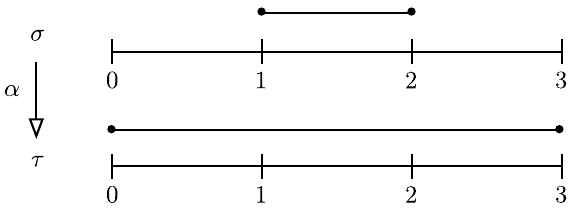}
	\caption{Two integral functions $\sigma, \tau: \bar P \to \Z$ drawn as barcodes and a charge-preserving morphism $(\sigma, \tau, \bar \alpha)$.}
	\label{fig:bot_neq_edit}
	\end{figure}
\end{ex}

\begin{center}
    \Large
    Erratum for ``Edit Distance and Persistence Diagrams Over Lattices''
\end{center}

\begin{abstract}
This erratum corrects a mistake in ``Edit Distance and Persistence Diagrams
Over Lattices'' published in \emph{SIAM J. Algebra Geometry} 6 (2022), pp 134--155. 
This mistake rendered the edit distance between integral functions identically zero.
Here we implement some minor modifications that fix this mistake.
To demonstrate this, we prove a non-trivial lower bound for the edit distance between integral functions.
\end{abstract}

The edit distance between integral functions, as written in the published version of the article, is identically zero; see Example~\ref{ex:zero-distance}.
We thank Luis Scoccola for finding this problem.

\setcounter{section}{10}
\setcounter{defn}{0}
\begin{ex}\label{ex:zero-distance}
    Consider the example in Figure~\ref{fig:ex}.
    Here, we have three totally ordered posets, $P_1$, $P_2$, and $P_3$, each with an integral function $\sigma_i : P_i \to \Z$ described
    by blue and red bars.
    A blue segment indicates an assignment of $+1$ to that interval and a red segment indicates an assignment of $-1$ to that interval.
    The metric on each $P_i$ is the metric inherited from its embedding, as drawn, into the real line.
    The arrows between posets describe bounded lattice functions inducing charge-preserving morphisms between integral functions.
    Thus, we have a path in $\Fnc$ from $\sigma_1$ to $\sigma_3$.
    The distortions of the bounded lattice functions from $P_2$ to $P_1$ and from $P_2$ to $P_3$ are $1$.
    Therefore, the length of this path between $\sigma_1$ and $\sigma_3$ is $2$.
    This process can be refined, replacing $\sigma_2$ with an alternating sequence of intervals that are arbitrarily close, creating paths of arbitrarily small length in $\Fnc$.
    Therefore, $d_\Fnc(\sigma_1, \sigma_3) = 0$.
    \end{ex}

\begin{figure}[h]
    \centering \includegraphics[scale=1.4]{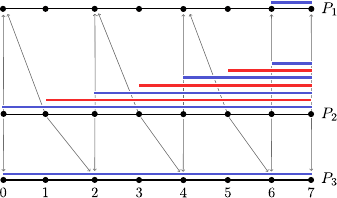}
    \caption{Path in $\Fnc$ whose length can be made arbitrarily small.}
    \label{fig:ex}
\end{figure}

\newpage

To fix this issue, we make the following modifications:
\begin{enumerate}
    \item We assume that for any metric lattice $(P, d_P)$, the distance function $d_P$ is order-preserving when viewed as a function from $P^\op \times P$ to $\mathbb R^{\geq 0}$.
    This assumption is equivalent to the statement that for any $a \leq b \leq c$ in $P$, $d_P(a,b) \leq d_P(a,c)$ and ${d_P(b,c) \leq d_P(a,c)}$.
    This condition is satisfied by most reasonable metric posets of interest in applied topology.
    In particular, every subposet of $\mathbb R^n$ with the inherited metric satisfies this condition.

    \item We modify the category $\Fnc$ to consist of only those integral functions that arise as a M\"obius inversion of an order-preserving function.
    This restriction does not limit our overall pipeline in the slightest as every persistence diagram of a filtration satisfies this condition.

    \item We require morphisms in $\Fnc$ to satisfy the push-forward condition (Equation~\ref{eq: push-forward}) everywhere, including along the diagonal.
    Again, this does not restrict the broader pipeline as every morphism between filtrations induces such a morphism.
    This modification does, however, make $d_\Fnc$ more rigid as persistence diagrams with different total charges will be infinitely far apart.
    This rigidity can be mitigated by increasing the values of two persistence diagrams along their diagonals so that they have the same total charges.
\end{enumerate}
These modifications lead to the following new definitions.

\begin{defn}\label{def: metric-lattice}
    A \define{finite (extended) metric lattice} is a pair $(P, d_P)$ where $P$ is a finite lattice and $d_P: P^{\op} \times P \to \mathbb R^{\geq 0} \cup \{\infty\}$ is both a metric on $P$ and an order-preserving function on $P^{\op} \times P$. 
\end{defn}

\begin{defn}[Modifies Definition $6.5$]
    The \define{category of integral functions}, denoted $\Fnc$, is the category whose objects are functions $\partial f : \bar P \to \Z$ where $P$ is a finite metric lattice and $f: \bar P \to \Z$ is a monotone function.
    Morphisms from $\partial f: \bar P \to \Z$ to $\partial g: \bar Q \to \Z$ are given by bounded lattice maps $\bar \alpha: \bar P \to \bar Q$ such that for any $I \in \bar Q$,
    \begin{equation}\label{eq: push-forward}
        \partial g(I) = \sum_{J \in \bar\alpha^{-1}(I)} \partial f (J).
    \end{equation}
\end{defn}

With these modifications, we show that the edit distance in the category of integral functions is non-trivial by proving a lower bound theorem, Theorem~\ref{thm:lower-bound}.
To state this theorem, we first need the following definitions.
\begin{defn}
    Let $(P, d_P)$ be a finite metric lattice.
    An up-set of $P$ is a subposet $A \subseteq P$ such that if $a \in A$ and $b \in P$ with $a \leq b$ then $b \in A$.
    Given a monotonic function $f: \bar P \to \Z$, we are particularly interested in up-sets of the form $f^{\geq i} := \{ I \in \bar P \mid f(I) \geq i \} \subseteq \bar P$ for $i \in \Z$.
\end{defn}

Every upset $A$ can be uniquely characterized by its set of minimal elements
\[
    \min (A) := \{ a \in A \mid \text{if } \exists a' \in A \text{ with } a' \leq a \text{ then } a' = a \}.
\]

\begin{defn}
    Let $(P, d_P)$ be a finite metric lattice and let $A \subseteq \bar P$ be an up-set.
    The \define{birth diameter} of $A$ is
    \[
        \diam_b(A) := \max_{[a,b] \in A} d_P(a, \top_P).
    \]
    Similarly, the \define{death diameter} of $A$ is
    \[
        \diam_d(A) := \max_{[a,b] \in A} d_P(b, \top_P).
    \]
    If $A$ is empty then we set $\diam_b (A) = \diam_d(A) = 0$.
\end{defn}

The order-preserving assumption on metric lattices introduced in Definition~\ref{def: metric-lattice} implies that both the birth diameter and the death diameter of an up-set $A$ will always be attained by minimal elements of $A$.

\begin{thm}\label{thm:lower-bound}
    Let $\partial f: \bar P \to \Z$ and $\partial g: \bar Q \to \Z$ be objects in $\Fnc$.
    Define ${D_f : \Z \to \mathbb R^2}$ as the function 
    \[
        D_f(i) =  \big( \diam_b (f^{\geq i}), \diam_d (f^{\geq i}) \big)
    \]
    and define $D_g : \Z \to \mathbb{R}^2$ analogously.
    Then
    \begin{equation}\label{eq: lb}
        d_\Fnc(\partial f, \partial g) \geq || D_f (i) - D_g (i) ||_\infty
    \end{equation}
    for all $i \in \Z$.
\end{thm}

Consider the persistence diagrams $\sigma_1$ and $\sigma_3$ presented in Example~\ref{ex:zero-distance}.
Both $\sigma_1$ and~$\sigma_3$ are the M\"obius inversions of monotone integral functions $f_1$ and $f_3$ respectively.
We have that $D_{f_1} (1) = (1, 0)$ and $D_{f_3} (1) = (7, 0)$ so Theorem~\ref{thm:lower-bound} guarantees that $d_\Fnc(\sigma_1, \sigma_3) \geq 6$.
Note that the problematic persistence diagram, $\sigma_2$, is no longer in $\Fnc$ as it is not the M\"obius inversion of a monotone function.

\begin{ex}
Let $[0,10]$ be the totally ordered poset of natural numbers from $0$ to~$10$, and let
$P := [0,10]^2$ be the product poset.
Define $d_P \big( (a,b) , (c,d) \big)$ as $\max \left\{ |a-c|, |b-d| \right\}$. 
Note that $d_P$ is order-preserving.
Consider the two objects $\partial f, \partial g : \bar P \to \Z$ of $\Fnc$ illustrated in Figure~\ref{fig:erratum_examples}.
The function $\partial f$ assigns to the interval starting at $(0, 0)$ and ending at $(10, 10)$ (that is, $\big[ (0,0), (10,10) \big] \in \bar P$) the value~$1$ and, to the rest of $\bar P$, the value $0$.
The function $\partial g$ assigns to $\big[ (0,5), (10,10) \big]$ and $\big[ (5,0), (10,10) \big]$ the value~$1$, to $\big[ (5,5), (10,10) \big]$ the value~$-1$, and the rest~$0$.
We now compute the lower bound of Theorem~\ref{thm:lower-bound}.
The upsets $f^{\geq i}$ and $g^{\geq i}$ are generated by the following sets of minimal intervals:
\[
    \setlength\arraycolsep{10pt} 
    \renewcommand\arraystretch{1.5} 
    \begin{array}{|c|c|c|}
    \hline
    i & \min \big( f^{\geq i} \big) & \min \big( g^{\geq i} \big) \\ \hline 
    0 &  \left\{ \big[(0,0), (0,0) \big] \right\} & \left\{ \big[(0,0), (0,0) \big] \right\} \\ \hline
    1 & \left\{ \big[(0,0), (10,10) \big] \right\} & \left\{ \big[(0,5), (10,10) \big], \big[(5,0), (10,10) \big] \right\} \\ \hline
    \end{array}
\]
The lower bound appearing in Equation~\ref{eq: lb} is calculated as follows:
\[
    \setlength\arraycolsep{10pt} 
    \renewcommand\arraystretch{1.5} 
    \begin{array}{|c|c|c|c|}
    \hline
    i & D_f(i) & D_g(i) & \Vert D_f(i) - D_g(i) \Vert_\infty \\ \hline
    0 & (10, 10) & (10, 10) & 0 \\ \hline
    1 & (10, 0) & (5, 0) & 5 \\ \hline
    \end{array}
\]
Thus, $d_{\Fnc} (\partial f, \partial g) \geq 5$.

\begin{figure}
\centering
\begin{subfigure}{0.4\textwidth}
	\centering
    \includegraphics[scale = 0.9]{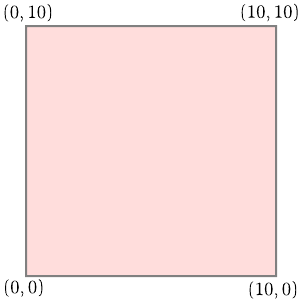}
    \caption{$\partial f : \bar P \to \Z$}
    \label{fig:first}
\end{subfigure}
\hfill
\begin{subfigure}{0.4\textwidth}
	\centering
    \includegraphics[scale = 0.9]{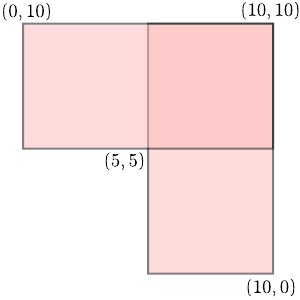}
    \caption{$\partial g : \bar P \to \Z$}
    \label{fig:second}
\end{subfigure}
        \caption{Two objects in $\Fnc$ where $P$ is the subposet $[0,10]^2 \subseteq \Z \times \Z$ with the product ordering.}
\label{fig:erratum_examples}
\end{figure}

\end{ex}

To prove Theorem~\ref{thm:lower-bound}, we'll use the following lemmas.

\begin{lem}[Rota's Galois Connection Theorem; \cite{GaloisConnections}]\label{lem:RGCT}
    Let $\bar \alpha: \bar P \to \bar Q$ be a bounded lattice map and let $\bar \beta: \bar Q \to \bar P$ be the map sending $I \in \bar Q$ to $I^* := \max \bar\alpha^{-1}[\bot_{\bar Q}, I]$.
    Then for any $f: \bar P \to \Z$ in $\Mon$ and any $I \in \bar Q$,
    \[
        \big( \partial ( f \circ \bar\beta) \big)(I) = \sum_{J \in \bar \alpha^{-1}(I)} \partial f(J).
    \]
\end{lem}

\begin{lem}\label{lem:mon-and-fnc}
    Let $\partial f: \bar P \to \Z$ and $\partial g: \bar Q \to \Z$ be objects in $\Fnc$.
    Then
    \[
        d_\Fnc(\partial f, \partial g) = d_\Mon (f,g).
    \]
\end{lem}
\begin{proof}
    We will prove this by showing that a bounded lattice map $\bar\alpha: \bar P \to \bar Q$ induces a morphism from $\partial f$ to $\partial g$ in $\Fnc$ if and only if $\bar \alpha$ induces a morphism from $f$ to $g$ in $\Mon$.
    Let~$\bar \beta: \bar Q \to \bar P$ be the map sending $I$ to $I^*$ as in Lemma~\ref{lem:RGCT}.
    We have that $\bar \alpha$ is a morphism in $\Mon$ if and only if $g(I) = f(I^*)$ for all $I \in \bar Q$.
    This is equivalent to $g = f \circ \bar\beta$.
    Because the M\"obius inversion operator is invertible, this is equivalent to $\partial g = \partial (f \circ \bar \beta)$ which is true if and only if $\partial g(I) = \big(\partial (f \circ \bar \beta)\big)(I)$ for all $I \in \bar Q$.
    Now by Lemma~\ref{lem:RGCT},
    \[
        \partial g(I) = \big(\partial (f \circ \bar \beta)\big)(I) = \sum_{J \in \bar \alpha^{-1}(I)} \partial f(J)
    \]
    for all $I \in \bar Q$.
    This is equivalent to $\bar \alpha$ being a morphism from $\partial f$ to $\partial g$ in $\Fnc$.
    This implies that $d_\Fnc(\partial f, \partial g) = d_\Mon (f,g)$ as desired.
\end{proof}

Now the problem of showing that $d_\Fnc$ is non-trivial has been reduced to showing that~$d_\Mon$ is non-trivial.
This will be established in Lemma~\ref{lem:dmon-lb} using Lemma~\ref{lem:supset-containment}.

\begin{lem}\label{lem:supset-containment}
    Let $f: \bar P \to \Z$ and $g: \bar Q \to \Z$ be objects in $\Mon$ and let $\bar \alpha$ be a morphism from $f$ to $g$.
    Then $\bar \alpha (f^{\geq i}) \subseteq g^{\geq i}$ for any $i \in \Z$.
\end{lem}
\begin{proof}
    Let $i \in \Z$ and $I \in f^{\geq i}$.
    Observe that $\bar \alpha(I)^* := \max \bar \alpha^{-1}[\bot_Q, \bar \alpha(I)] \geq I$ because $I \in \bar \alpha^{-1}[\bot_Q, \bar \alpha(I)]$.
    Now since $\bar \alpha$ is a morphism, $g \big( \bar \alpha(I) \big) = f \big( \bar\alpha(I)^* \big)$.
    Finally, because $f$ is monotone, we have $f \big( \bar\alpha(I)^* \big) \geq f(I) \geq i$ and therefore $\bar \alpha (I) \in g^{\geq i}$.
\end{proof}

\begin{lem}\label{lem:dmon-lb}
    Let $f: \bar P \to \Z$ and $g: \bar Q \to \Z$ be objects in $\Mon$ and let $ \bar \alpha: P \to Q$ be a morphism from $f$ to $g$.
    Then,
    \[
        || \bar \alpha || \geq ||D_f (i) - D_g(i)||_\infty = \max \Big\{ \big| \diam_b (f^{\geq i}) - \diam_b (g^{\geq i}) \big|,\, \big| \diam_d (f^{\geq i}) - \diam_d (g^{\geq i}) \big| \Big\}
    \]
    for any $i \in \Z$.
\end{lem}
\begin{proof}
    Because $\bar \alpha$ is a morphism from $f$ to $g$, $f(\top_{\bar P}) = g(\top_{\bar Q})$ so if $i > f(\top_{\bar P})$ then both $f^{\geq i}$ and $g^{\geq i}$ are empty.
    Therefore, choose $i \leq f(\top_{\bar P})$.
    We will first show that
    \[
        ||\bar \alpha|| \geq |\diam_b (f^{\geq i}) - \diam_b(g^{\geq i})|
    \]
    by considering the two cases where ${\diam_b (f^{\geq i}) \geq \diam_b(g^{\geq i})}$ and ${\diam_b (f^{\geq i}) < \diam_b(g^{\geq i})}$.

    If $\diam_b(f^{\geq i}) \geq \diam_b(g^{\geq i})$ then choose ${[a,b] \in f^{\geq i}}$ with $d_P (a, \top_P) = \diam_b(f^{\geq i})$.
    By Lemma \ref{lem:supset-containment}, $\bar \alpha([a,b]) = [\alpha(a), \alpha(b)] \in g^{\geq i}$ and so $d_Q (\alpha(a), \top_{Q}) \leq \diam_b(g^{\geq i})$.
    Now
    \begin{align*}
        \diam_b(f^{\geq i}) - \diam_b(g^{\geq i}) &= d_{P} (a, \top_{P}) - \diam_b(g^{\geq i})\\
        &\leq d_{P} (a, \top_{P}) - d_{Q} (\alpha(a), \top_{Q})\\
        &\leq ||\alpha|| = ||\bar \alpha||.
    \end{align*}

    Now suppose $\diam_b(f^{\geq i}) < \diam_b(g^{\geq i})$.
    Then choose a minimal $J = [c,d] \in g^{\geq i}$ with $d_{Q}(c, \top_{Q}) = \diam_b(g^{\geq i})$.
    From the assumption that $d_{Q}$ is order-preserving, we can always choose such a~$J$.
    Let $[c^*, d^*] = J^* := \max \bar \alpha^{-1}[\bot_{\bar Q}, J]$.
    Because $\alpha$ is a morphism, $f(J^*) = g(J) \geq i$ so $J^* \in f^{\geq i}$ and, by Lemma \ref{lem:supset-containment}, we have $\bar \alpha (J^*) \in g^{\geq i}$.
    Since $\bar \alpha(J^*) := \bar \alpha \big( \max \bar\alpha^{-1} [\bot_{\bar Q}, J] \big) \leq J$, the minimality of $J$ in $g^{\geq i}$ now implies that $\bar \alpha(J^*) = J$ or, equivalently, $\alpha(c^*) = c$ and $\alpha(d^*) = d$.
    This gives
    \begin{align*}
        \diam_b(g^{\geq i}) - \diam_b(f^{\geq i}) &= d_{Q}(c, \top_{Q}) - \diam_b(f^{\geq i})\\
        &= d_{Q}(\alpha(c^*), \top_{Q}) - \diam_b(f^{\geq i})\\
        &\leq d_{Q}(\alpha(c^*), \top_{Q}) - d_{P} (c^*, \top_{P})\\
        &\leq ||\alpha|| = ||\bar \alpha||.
    \end{align*}

    The same argument applied to the death coordinates of intervals gives
    \[
        ||\bar \alpha|| \geq |\diam_d (f^{\geq i}) - \diam_d(g^{\geq i})|.
    \]
\end{proof}

Theorem~\ref{thm:lower-bound} now follows from Lemma~\ref{lem:dmon-lb} and Lemma~\ref{lem:mon-and-fnc} by choosing two integral functions $\partial f_0$ and $\partial f_n$ in $\Fnc$ and considering a path $\mathcal P$
\begin{center}
    \begin{tikzcd}
        \partial f_0\ar[r, leftrightarrow, "\bar\alpha_0"]    &\partial f_1\ar[r, leftrightarrow, "\bar\alpha_1"] &\cdots\ar[r, leftrightarrow, "\bar\alpha_{n-2}"]   &\partial f_{n-1}\ar[r,leftrightarrow, "\bar\alpha_{n-1}"]    &\partial f_n.
    \end{tikzcd}
\end{center}
Focusing solely on birth coordinates for the moment, Theorem~\ref{thm:lower-bound} implies
\[
    \text{length}(\mathcal P) = \sum_{i=0}^{n-1} ||\bar \alpha_i|| \geq \sum_{i=0}^{n-1} \Big| \diam_b(f_i^{\geq i}) - \diam_b(f_{i+1}^{\geq i}) \Big| \geq \Big| \diam_b(f_0^{\geq i}) - \diam_b(f_n^{\geq i}) \Big|
\]
for any $i \in \Z$.
This argument applied to death coordinates implies the analogous lower bound and completes the proof of Theorem~\ref{thm:lower-bound}.

\newpage

\bibliographystyle{plain}
\bibliography{ref}{}


\end{document}